\DeclareRobustCommand{\gaussk}{\DOTSB\gaussk@\slimits@}
\newcommand{\gaussk@}{\mathop{\vphantom{\sum}\mathpalette\bigcal@{K}}}
\newcommand{\bigcal@}[2]{%
  \vcenter{\m@th
    \sbox\z@{$#1\sum$}%
    \dimen@=\dimexpr\ht\z@+\dp\z@
    \hbox{\resizebox{!}{0.8\dimen@}{$\mathcal{K}$}}%
  }%
}
\newcommand{\cfracplus}{\mathbin{\cfracplus@}}
\newcommand{\cfracplus@}{%
  \sbox\z@{$\dfrac{1}{1}$}%
  \sbox\tw@{$+$}%
  \raisebox{\dimexpr\dp\tw@-\dp\z@\relax}{$+$}%
}
\newcommand{\cfracdots}{\mathord{\cfracdots@}}
\newcommand{\cfracdots@}{%
  \sbox\z@{$\dfrac{1}{1}$}%
  \sbox\tw@{$+$}%
  \raisebox{\dimexpr\dp\tw@-\dp\z@\relax}{$\cdots$}%
}
\newtheorem{theorem}{Theorem}
\newtheorem{corollary}{Corollary}
\newtheorem{lemma}{Lemma}
\theoremstyle{definition}
\theoremstyle{remark}
\newtheorem{remark}{Remark}
\numberwithin{equation}{section}
\numberwithin{theorem}{section}
\numberwithin{corollary}{section}
\numberwithin{remark}{section}
\numberwithin{lemma}{section}
\definecolor{airforceblue}{rgb}{0.36, 0.54, 0.66}
\definecolor{azure}{rgb}{0.0, 0.5, 1.0}
\definecolor{bluegray}{rgb}{0.4, 0.6, 0.8}
\definecolor{coolblack}{rgb}{0.0, 0.18, 0.39}
 \definecolor{darkmidnightblue}{rgb}{0.0, 0.2, 0.4}
 \definecolor{indigo}{rgb}{0.0, 0.25, 0.42}
  \definecolor{mediumelectricblue}{rgb}{0.01, 0.31, 0.59}
  \definecolor{royalazure}{rgb}{0.0, 0.22, 0.66}
  \definecolor{royalbluetraditional}{rgb}{0.0, 0.14, 0.4}
  \definecolor{ultramarineblue}{rgb}{0.25, 0.4, 0.96}
  \definecolor{unitednationsblue}{rgb}{0.36, 0.57, 0.9}
\definecolor{babyblueeyes}{rgb}{0.63, 0.79, 0.95}
 \definecolor{cornflowerblue}{rgb}{0.39, 0.58, 0.93}
 \definecolor{skyblue}{rgb}{0.53, 0.81, 0.92}
 \definecolor{yaleblue}{rgb}{0.06, 0.3, 0.57}
 \definecolor{darktangerine}{rgb}{1.0, 0.66, 0.07}
  \definecolor{orangepeel}{rgb}{1.0, 0.62, 0.0}
 \definecolor{deepchestnut}{rgb}{0.73, 0.31, 0.28}
 \definecolor{firebrick}{rgb}{0.7, 0.13, 0.13}
 \definecolor{mordantred19}{rgb}{0.68, 0.05, 0.0}
 \definecolor{richmaroon}{rgb}{0.69, 0.19, 0.38}
 \definecolor{dimgray}{rgb}{0.41, 0.41, 0.41}
 \definecolor{warmblack}{rgb}{0.0, 0.26, 0.26}
  \definecolor{forestgreen}{rgb}{0.0, 0.27, 0.13}
  \definecolor{gray-asparagus}{rgb}{0.27, 0.35, 0.27}
  \definecolor{huntergreen}{rgb}{0.21, 0.37, 0.23}
  \definecolor{yellow-green}{rgb}{0.6, 0.8, 0.2}
\definecolor{harvardcrimson}{rgb}{0.79, 0.0, 0.09}
\definecolor{fluorescentpink}{rgb}{1.0, 0.08, 0.58}
\definecolor{richmaroon}{rgb}{0.69, 0.19, 0.38}
\definecolor{darkgreen}{rgb}{0.0, 0.2, 0.13}
\definecolor{pumpkin}{rgb}{1.0, 0.46, 0.09}
\definecolor{gold}{rgb}{0.83, 0.69, 0.22}
\definecolor{goldenrod}{rgb}{0.85, 0.65, 0.13}
\definecolor{mikadoyellow}{rgb}{1.0, 0.77, 0.05}
\definecolor{selectiveyellow}{rgb}{1.0, 0.73, 0.0}
\definecolor{yellowmunsell}{rgb}{0.94, 0.8, 0.0}
\definecolor{cobalt}{rgb}{0.0, 0.28, 0.67}
\definecolor{sapphire}{rgb}{0.03, 0.15, 0.4}
\definecolor{coolblack}{rgb}{0.0, 0.18, 0.39}
\definecolor{royalblue(traditional)}{rgb}{0.0, 0.14, 0.4}
\definecolor{tyrianpurple}{rgb}{0.4, 0.01, 0.24}
\definecolor{forestgreen(traditional)}{rgb}{0.0, 0.27, 0.13}
 \definecolor{purpleheart}{rgb}{0.41, 0.21, 0.61}
 \definecolor{regalia}{rgb}{0.32, 0.18, 0.5}
 \definecolor{apricot}{rgb}{0.98, 0.81, 0.69}
\renewcommand{\pmod}[1]{\;\allowbreak(\operatorname{mod}#1)}
\begin{document}
	\title[On Schultz's generalization of Borweins' cubic identity]{On Schultz's generalization of Borweins' cubic identity}

\author{Heng Huat Chan}
\address{Mathematical Research Center, Shandong University, No. 1 Building, 5 Hongjialou Road, Jinan, P.R.C. China 250100.}
\email{chanhh6789@sdu.edu.cn}

\author{Song Heng Chan}
\address{Division of Mathematical Sciences, School of Physical and Mathematical Sciences, Nanyang Technological University, 21 Nanyang link, Singapore, 637371, Singapore}

\email{chansh@ntu.edu.sg}
%\thanks{The second author is partially supported
%by the Ministry of Education, Singapore,  Academic Research Fund, Tier 1 (RG15/23).}

\author{Zhi-Guo Liu}
\address{School of Mathematical Sciences, Key Laboratory of MEA (Ministry of Education) Shanghai Key Laboratory of PMMP, East China Normal University, Shanghai 200241, P.R.China}
\email{zgliu@math.ecnu.edu.cn}

\author{Wadim Zudilin}

\address{Department of Mathematics, IMAPP, Radboud University, PO Box 9010, 6500~GL Nijmegen, Netherlands}
\email{w.zudilin@math.ru.nl}

\date{\today}

\begin{abstract}
In 1991, the Borweins established a cubic analogue of Jacobi's identity for theta functions, which is used by B.C.~Berndt, S.~Bhargava, and F.G.~Garvan
in the development of Ramanujan's
cubic theory of elliptic functions. In 2013,
D.~Schultz discovered an identity for theta series in three variables which generalizes the Borweins' identity.
In this article, we revisit Schultz's identity and present two distinct approaches to its derivation. Our investigation not only provides
new proofs but also yields several new Schultz-type identities.
\end{abstract}

\maketitle

\maketitle

\section{Introduction}
\label{s1}

Around 1991, J.M.~Borwein and P.B.~Borwein \cite[(2.3)]{Borweins}
published the identity
\begin{align}
\left(\sum_{m,n\in \mathbf Z} \omega^{m-n}q^{m^2+mn+n^2}\right)^3&+\left(\sum_{m,n\in \mathbf Z}  q^{(m+1/3)^2+(m+1/3)(n+1/3)+(n+1/3)^2}\right)^3\notag\\
 &\qquad\qquad= \left(\sum_{m,n\in \mathbf Z} q^{m^2+mn+n^2}\right)^3,\label{Borweins}
\end{align}
where $\omega=e^{2\pi i /3}$.
Their identity is a cubic analogue of Jacobi's classical identity \cite[p.~90, (14)]{Jacobi},
\cite[Theorem 3.7, Corollary 3.8]{Cooper} given by
\begin{equation}
\left(\sum_{m,n\in \mathbf Z} (-1)^{m+n}q^{m^2+n^2}\right)^2
+\left(\sum_{m,n\in \mathbf Z} q^{(m+1/2)^2+(n+1/2)^2}\right)^2=\left(\sum_{m,n\in \mathbf Z} q^{m^2+n^2}\right)^2.
\label{Jacobi}
\end{equation}
Let
$$
{}_2F_1\left(a,b;c;z\right)
=\sum_{j=0}^\infty \dfrac{(a)_j(b)_j}{(c)_j}\dfrac{z^j}{j!},$$
where
$$(a)_j= \begin{cases} \qquad1\quad &\text{when $j=0$,} \\
  \displaystyle{\prod_{k=1}^j (a+k-1) } \quad &\text{when $j\in \mathbf Z^+$.}
\end{cases}$$
The Borweins used \eqref{Borweins} to show that
\begin{equation}\label{Borweins-inversion}
\sum_{m,n\in \mathbf Z} q^{m^2+mn+n^2}
={}_2F_1\left(\dfrac{1}{3},\dfrac{2}{3};1;\kappa(q)\right)
\end{equation}
where
$$\kappa(q)=
\dfrac{c^3(q)}{a^3(q)}$$
with
$$c(q) = \sum_{m,n\in \mathbf Z} q^{(m+1/3)^2+(m+1/3)(n+1/3)+(n+1/3)^2}$$
and
$$a(q)=\sum_{m,n\in \mathbf Z} q^{m^2+mn+n^2}.$$
Borweins' inversion formula \eqref{Borweins-inversion} is crucial, for example, in proving Ramanujan's identity  \cite[(31)]{Ramanujan-pi}, \cite{Chan-Liaw}
\[
\dfrac{27}{4\pi}=
\sum_{j=0}^\infty (15j+2)\dfrac{(1/3)_j(2/3)_j(1/2)_j}{(1)_j^3} \left(\dfrac{2}{27}\right)^j.
\]
Using Borweins' identities \eqref{Borweins} and \eqref{Borweins-inversion}, B.C.~Berndt, S.~Bhargava and F.G.~Garvan \cite{BBG} developed Ramanujan's theory of elliptic functions to the cubic base. An alternative approach to Ramanujan's theory of elliptic functions to the cubic base without the use of \eqref{Borweins} was recently discovered by H.H.~Chan and Z.G.~Liu \cite{Chan-Liu-RIM}.

In 2013, D.~Schultz \cite{Schultz} published a two-variable generalization of~\eqref{Borweins}.

\begin{theorem}%[D.~Schultz \cite{Schultz}]
\label{th-Schultz}
The following identity is valid:
\begin{align}
&\left(\sum_{m,n\in \mathbf Z} \omega^{m-n} q^{(m+1/3)^2+(m+1/3)(n+1/3)+(n+1/3)^2} x^{m+1/3}y^{n+1/3}\right)^3 \notag\\
&\quad
+ \left(\sum_{m,n\in \mathbf Z} q^{m^2+mn+n^2}x^my^n\right)^3
=\left(\sum_{m,n\in \mathbf Z} \omega^{m-n}  q^{m^2+mn+n^2}x^my^n\right)^3
\notag \\ &\qquad\qquad +\left(\sum_{m,n\in \mathbf Z}
q^{(m+1/3)^2+(m+1/3)(n+1/3)+(n+1/3)^2}x^{m+1/3}y^{n+1/3}\right)^3.
\label{Borweins-xy}
\end{align}
\end{theorem}
We shall refer to \eqref{Borweins-xy} as Schultz's identity.
To see that \eqref{Borweins} is a specialization %$x=y=1$ of
of \eqref{Borweins-xy}, we need to set $x=y=1$ in \eqref{Borweins-xy}
and show that
\begin{equation}\label{Borweins-0}
\sum_{m,n\in \mathbf Z} \omega^{m-n} q^{(m+1/3)^2+(m+1/3)(n+1/3)+(n+1/3)^2} =0.
\end{equation}
Identity \eqref{Borweins-0}
is equivalent to $S=0$ with
\[
S=\sum_{m,n\in \mathbf Z} \omega^{m-n} q^{m^2+mn+n^2+m+n}.
\]
Replacing $(m,n)$ by $(n,-m-n-1)$ in the latter double sum, we deduce that
\[
S=\omega^{-1}\cdot S,
\]
which implies that $S=0$ and completes the proof of~\eqref{Borweins-0}.
For other proofs of \eqref{Borweins-0}, see
\cite[Corollary]{XP} and
\cite[Section 2.3.3]{Schultz-thesis}.

The first proof of Theorem~\ref{th-Schultz} was provided by Schultz \cite[p.~622]{Schultz}.
On page 663 of \cite{Schultz}, Schultz mentioned four possible methods to establishing Theorem~\ref{th-Schultz}.
His proof was carried out using  his first
method. Schultz's remaining three methods indicated that Theorem~\ref{th-Schultz} may be proved by
\begin{enumerate}[(1)]
\item multiplying out and combining the series on both sides of \eqref{Borweins-xy},
\item verifying that both sides of \eqref{Borweins-xy} agree when written in terms of some basis of the theta functions of order 3,
\item verifying that both sides of \eqref{Borweins-xy} agree up to a certain finite order at a finite set of points.
\end{enumerate}
Schultz mentioned the difficulties of proving \eqref{Borweins-xy} using the above methods but gave little details.
Around 2017, X.R.~Ma and R.Z.~Wei \cite[Corollary 4.12]{Ma-Wei} gave a new proof of Theorem~\ref{th-Schultz} which did not follow any of Schultz's suggestions.

In this article, we give two new proofs of Theorem~\ref{th-Schultz} which are also independent of Schultz's insights.
Our first approach is motivated by the work of X.M.~Yang \cite{Yang}. In Section~\ref{s2}, we show that \eqref{Borweins-xy} follows from one of her identities \cite[(3.8)]{Yang}.
In Section~\ref{s4}, we use transformation formulas for theta functions and the identities in Sections~\ref{s2} and \ref{s3} to derive analogues of \eqref{Borweins-xy}.
Our second proof of Theorem~\ref{th-Schultz} connects \eqref{Borweins-xy} to the study of Macdonald's identities \cite{Chan-Cooper-Toh}, and this is presented in Section~\ref{s4}.

Motivated by Theorem~\ref{th-Schultz}, we found an analogue of \eqref{Borweins-xy} for Jacobi's identity \eqref{Jacobi}.

\begin{theorem}
\label{th-Jacobi}
We have the following identity:
\begin{align}
&\left(\sum_{m,n\in \mathbf Z} (-1)^{m+n} q^{(m+1/2)^2+(n+1/2)^2}x^{m+1/2}y^{n+1/2}\right)^2+\left(\sum_{m,n\in \mathbf Z} q^{m^2+n^2}x^my^n\right)^2
\notag\\
 &=\left(\sum_{m,n\in \mathbf Z}
q^{(m+1/2)^2+(n+1/2)^2}x^{m+1/2}y^{n+1/2}\right)^2+\left(\sum_{m,n\in \mathbf Z} (-1)^{m+n} q^{m^2+n^2}x^my^n\right)^2.
\label{Jacobi-xy}
\end{align}
\end{theorem}

Identity \eqref{Jacobi-xy} is not new and, presumably, was known to Jacobi; its traces can
be found in classical textbooks, e.g.,  as part of Exercise~1 in \cite[Chap.~21]{Watson-Whittaker} or \cite[p.~9]{Lawden}.

Since
\begin{align*}
\sum_{j=-\infty}^\infty (-1)^j q^{(j+1/2)^2}
&=\sum_{\ell=-\infty}^\infty (-1)^{\ell-1}q^{(\ell-1/2)^2}\\
&=\sum_{k=-\infty}^\infty (-1)^{-k-1} q^{(-k-1/2)^2}\\
&=-\sum_{k=-\infty}^\infty (-1)^k q^{(k+1/2)^2},
\end{align*}
we conclude that
\begin{equation}\label{Jacobi-0}
\sum_{m,n\in \mathbf Z} (-1)^{m+n} q^{(m+1/2)^2+(n+1/2)^2} =0.
\end{equation}
By \eqref{Jacobi-0}, we see that \eqref{Jacobi} follows from Theorem~\ref{th-Jacobi} with the substitution  $x=y=\nobreak 1$.

In Section~\ref{s5}, we present and prove several new extensions of Theorem \ref{th-Jacobi}.
In Section \ref{s6}, we derive further identities equivalent to \eqref{Borweins}.
The identities presented in these last two sections are motivated by the recent discoveries of H.H.~Chan, S.H.~Chan and P.~Sole \cite{Chan-Chan-Sole}.

\section{Jacobi's theta functions and the first proof of Schultz's identity}
\label{s2}

Throughout this work, $q=e^{\pi i\tau}$ where the imaginary part of
$\tau$ is positive. The Jacobi theta functions are defined by
\begin{align*}
&\vartheta_1(z|\tau)=-i\sum_{n=-\infty}^{\infty}(-1)^n q^{(2n+1)^2/4} e^{(2n+1)iz},\\
&\vartheta_2(z|\tau) =\sum_{n=-\infty}^\infty q^{(2n+1)^2/4} e^{(2n+1)iz},\\
&\vartheta_3(z|\tau) =\sum_{n=-\infty}^\infty q^{n^2} e^{2niz}
\intertext{and}
&\vartheta_4(z|\tau) =\sum_{n=-\infty}^\infty (-1)^nq^{n^2} e^{2niz}.
\end{align*}

A straightforward calculation also yields the following expression for $\vartheta_1$:
\begin{equation}\label{T1-sum}
\vartheta_1(z|\tau)=2 \sum_{n=0}^\infty (-1)^n q^{(2n+1)^2/4}\sin
(2n+1)z.
\end{equation}
The infinite product representations of $\vartheta_1(z|\tau)$ and $\vartheta_3(z|\tau)$ (see \cite[p.~469]{Watson-Whittaker}, \cite[p.~30]{Chan-book}), which are equivalent to Jacobi's triple product identity, are given by
\begin{align}
\vartheta_1(z|\tau)&=2q^{1/4}(\sin z)\prod_{n=1}^{\infty} (1-q^{2n}) (1-q^{2n} e^{2iz}) (1-q^{2n} e^{-2iz}) \label{JTP-T1}\intertext{and}
\vartheta_3(z|\tau)&=\prod_{n=1}^{\infty} (1-q^{2n}) (1+q^{2n-1} e^{2iz}) (1+q^{2n-1} e^{-2iz}).\notag
\end{align}
The Jacobi theta function $\vartheta_1(z|\tau)$ satisfies
\begin{equation}\label{eqn1}
\vartheta_1(z|\tau)=-\vartheta_1(z+\pi|\tau)=-qe^{2iz}\vartheta_1(z+\pi\tau|\tau)
\end{equation}
and
\begin{equation}\label{eqn2}
\vartheta_1(z+(\pi+\pi\tau)/2|\tau)=q^{-1/4} e^{-iz} \vartheta_3(z|\tau).
\end{equation}
Let
\begin{equation*} %\label{eqn3}
\Lambda=\left\{x\pi+y\pi\tau\bigm|0\le x< 1,\, 0\le y < 1 \right\}
\end{equation*}
be the fundamental period parallelogram.
In $\Lambda$,
$\vartheta_1(z|\tau)$  has a simple zero at $z=0$  and $\vartheta_3(z|\tau)$ has a simple
zero at $z=(\pi+\pi\tau)/2$.
Using \eqref{eqn1} and \eqref{eqn2}, it is immediate that
$$\vartheta_1(3z|3\tau),\quad
   e^{2iz}\vartheta_1(3z+\pi\tau|3\tau) \quad\text{and}\quad
   e^{-2iz}\vartheta_1(3z-\pi\tau|3\tau)$$
satisfy the relations
\begin{equation}\label{quasi-rel}
f(z)=-f(z+\pi)=-q^{3} e^{6iz} f(z+\pi\tau).
\end{equation}
The next lemma plays an important role in our approach to \eqref{Borweins-xy}.

\begin{lemma} \label{independence} The functions $\vartheta_1(3z|3\tau)$,
   $e^{2iz}\vartheta_1(3z+\pi\tau|3\tau)$ and
   $e^{-2iz}\vartheta_1(3z-\pi\tau|3\tau)$ are linearly independent over $\mathbf C$.
\end{lemma}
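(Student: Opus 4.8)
The plan is to realize the three functions as eigenvectors, with pairwise distinct eigenvalues, of a single linear operator; once this is done, linear independence is automatic. The operator I would use is translation by $\pi/3$, namely $T\colon f(z)\mapsto f(z+\pi/3)$, and the only input I need is the modulus-$3\tau$ instance of the first equality in \eqref{eqn1}, that is $\vartheta_1(w+\pi|3\tau)=-\vartheta_1(w|3\tau)$, together with the explicit exponential prefactors.

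First I would compute the action of $T$ on each function. For $\vartheta_1(3z|3\tau)$ one has $\vartheta_1(3(z+\pi/3)|3\tau)=\vartheta_1(3z+\pi|3\tau)=-\vartheta_1(3z|3\tau)$, so $T$ multiplies it by $-1$. For $e^{2iz}\vartheta_1(3z+\pi\tau|3\tau)$ the prefactor contributes $e^{2\pi i/3}=\omega$ and the theta factor contributes $-1$, so $T$ multiplies it by $-\omega$. For $e^{-2iz}\vartheta_1(3z-\pi\tau|3\tau)$ the prefactor contributes $e^{-2\pi i/3}=\omega^{-1}=\omega^2$ and the theta factor again contributes $-1$, giving the multiplier $-\omega^2$.

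Thus the three functions are eigenfunctions of $T$ with the distinct eigenvalues $-1$, $-\omega$, $-\omega^2$, the distinctness resting only on $\omega$ being a primitive cube root of unity. Since each function is not identically zero (a Jacobi theta function does not vanish identically and the exponential prefactors never vanish), I would take a hypothetical relation $a\,\vartheta_1(3z|3\tau)+b\,e^{2iz}\vartheta_1(3z+\pi\tau|3\tau)+c\,e^{-2iz}\vartheta_1(3z-\pi\tau|3\tau)=0$ and apply $T$ and $T^2$ to it. Evaluating the resulting three relations at any point $z$ with $\vartheta_1(3z|3\tau)\,e^{2iz}\vartheta_1(3z+\pi\tau|3\tau)\,e^{-2iz}\vartheta_1(3z-\pi\tau|3\tau)\neq0$ gives a linear system in $(a,b,c)$ whose determinant is a nonzero multiple of the Vandermonde determinant of $1,\omega,\omega^2$, forcing $a=b=c=0$.

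I do not anticipate a genuine obstacle here; the only care needed is the bookkeeping of the exponential prefactors and confirming that the eigenvalues are truly distinct. An alternative route I would keep in reserve is to count zeros in the fundamental parallelogram $\Lambda$: each function satisfies \eqref{quasi-rel} and hence has exactly three zeros in $\Lambda$, and a short calculation places the zero sets on the three distinct lines with $\tau$-coordinate congruent to $0$, $1/3$ and $2/3$ modulo $1$, respectively; their disjointness then yields independence. The eigenvalue argument is cleaner, so I would present that one.
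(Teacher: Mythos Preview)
Your proof is correct and takes a genuinely different route from the paper's. The paper argues by evaluating a hypothetical relation \eqref{ind:eqn1} at the special points $z=0$ and $z=\pi/3$, using that $\vartheta_1(0|3\tau)=\vartheta_1(\pi|3\tau)=0$ while $\vartheta_1(\pi\tau|3\tau)\neq0$ to force the coefficients to vanish one by one. Your argument instead identifies the three functions as eigenvectors of the shift $T\colon f(z)\mapsto f(z+\pi/3)$ with the pairwise distinct eigenvalues $-1,-\omega,-\omega^2$, and then invokes the standard Vandermonde argument. Your approach is more conceptual and explains the independence structurally (it would generalize immediately to analogous families of $n$ functions under translation by $\pi/n$), and it avoids any need to evaluate $\vartheta_1$ at specific $\tau$-shifted points; the paper's approach, on the other hand, is more elementary in that it never introduces an operator and relies only on locating a couple of zeros. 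One small comment: your evaluation-at-a-point step is not strictly necessary, since the three functional identities obtained by applying $I,T,T^2$ already combine (e.g.\ by summing with weights $1,1,1$ and $1,\omega^2,\omega$ and $1,\omega,\omega^2$) to give $3af_1\equiv0$, $3bf_2\equiv0$, $3cf_3\equiv0$ directly.
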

\begin{proof}
Suppose there are complex numbers $c_1, c_2$ and $c_3$ such that
\begin{equation}\label{ind:eqn1}
c_1\vartheta_1(3z|3\tau)+c_2e^{2iz}\vartheta_1(3z+\pi\tau|3\tau)+c_3e^{-2iz}\vartheta_1(3z-\pi\tau|3\tau)=0.
\end{equation}
If we set $z=0$ in \eqref{ind:eqn1} and use the fact that $\vartheta_1(0|3\tau)=0$,
we find that
\begin{equation*}
(c_2-c_3)\vartheta_1(\pi\tau|3\tau)=0.
\end{equation*}
Since $\vartheta_1(\pi\tau|3\tau)=iq^{-1/6}\eta(\tau)\not=0$, we deduce that $c_2=c_3$.
Substituting $c_2=c_3$ into \eqref{ind:eqn1}, we obtain
\begin{equation}\label{ind:eqn2}
c_1\vartheta_1(3z|3\tau)+c_2\left(e^{2iz}\vartheta_1(3z+\pi\tau|3\tau)+e^{-2iz}\vartheta_1(3z-\pi\tau|3\tau)\right)=0.
\end{equation}
Next, we let $z=\pi/3$ in \eqref{ind:eqn2}. By \eqref{T1-sum}, $\vartheta_1(\pi|3\tau)=0$ and therefore,
\begin{equation*}
-i\sqrt{3}\vartheta_1(\pi\tau|3\tau)c_2=0,
\end{equation*}
which implies that $c_2=0$ since $\vartheta_1(\pi\tau|3\tau)\neq 0$.
Thus, $c_2=c_3=0$.
Identity \eqref{ind:eqn1} reduces to
$$c_1\vartheta_1(3z|3\tau)=0$$
which implies that $c_1=0$ since
$\vartheta_1(3z|3\tau)\neq 0$. The three functions are therefore linearly independent over $\mathbf {C}$.
\end{proof}

\begin{theorem}
\label{thm1}
Let $z,x,y\in \mathbf C$ and $q=e^{\pi i \tau}$ with $\operatorname{Im}\tau>0$. Take
\begin{equation}\label{A}
A(x,y|\tau) = \sum_{\substack{k,\ell\in \mathbf Z}} q^{2(\ell^2+\ell k+k^2)}e^{2ix(2\ell+k)}e^{2iy(2k+\ell)}
\end{equation}
and
\begin{equation}
C(x,y|\tau) = e^{2ix}e^{2iy}\sum_{\substack{k,\ell\in \mathbf Z}} q^{2(\ell^2+\ell k+k^2+k+\ell+1/3)}e^{2ix(2\ell+k)}e^{2iy(2k+\ell)}.\label{C}
\end{equation}
Then
\begin{align}\label{cubthetaen3}
&\vartheta_1(z+x|\tau)\vartheta_1(z+y|\tau)\vartheta_1(z-x-y|\tau)
=-A(x, y|\tau)\vartheta_1(3z|3\tau)\\
&\quad +q^{1/3} C(-x, -y|\tau) e^{2iz}\vartheta_1(3z+\pi\tau|3\tau)
+q^{1/3} C(x, y|\tau) e^{-2iz}\vartheta_1(3z-\pi\tau|3\tau)\nonumber
\end{align}
and
\begin{align}\label{cubthetaen4}
&\vartheta_3(z+x|\tau)\vartheta_3(z+y|\tau)\vartheta_3(z-x-y|\tau)
=A(x, y|\tau)\vartheta_3(3z|3\tau)\\
&\quad +q^{1/3} C(-x, -y|\tau) e^{2iz}\vartheta_3(3z+\pi\tau|3\tau)
+q^{1/3} C(x, y|\tau) e^{-2iz}\vartheta_3(3z-\pi\tau|3\tau).\nonumber
\end{align}
\end{theorem}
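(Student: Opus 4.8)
The plan is to read both \eqref{cubthetaen3} and \eqref{cubthetaen4} as a single linear-algebra statement inside the space of entire functions obeying the quasi-periodicity \eqref{quasi-rel}, and to exploit the fact that \eqref{cubthetaen4} is nothing but \eqref{cubthetaen3} after the substitution $z\mapsto z+(\pi+\pi\tau)/2$ combined with \eqref{eqn2}. Thus it suffices to produce an expansion of $T(z,x,y|\tau)$ with $z$-independent coefficients $\lambda_1,\lambda_2,\lambda_3$ once, and then to identify those coefficients with $-A$, $q^{1/3}C(-x,-y|\tau)$ and $q^{1/3}C(x,y|\tau)$.

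First I would establish the mere existence of the expansion \eqref{cubthetaen5}. The product $T(z,x,y|\tau)$ satisfies \eqref{quasi-rel}, exactly as $\vartheta_1(3z|3\tau)$, $e^{2iz}\vartheta_1(3z+\pi\tau|3\tau)$ and $e^{-2iz}\vartheta_1(3z-\pi\tau|3\tau)$ do. Dividing each of the latter three by $T$ yields elliptic functions with periods $\pi$ and $\pi\tau$ whose only poles are the three simple zeros of $T$ in $\Lambda$. The elliptic functions with poles bounded by this degree-$3$ divisor form a $3$-dimensional space containing the constants, so the four functions $1$ and the three quotients are linearly dependent; clearing denominators gives $cT=\lambda_1\vartheta_1(3z|3\tau)+\lambda_2 e^{2iz}\vartheta_1(3z+\pi\tau|3\tau)+\lambda_3 e^{-2iz}\vartheta_1(3z-\pi\tau|3\tau)$. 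Lemma~\ref{independence} forces $c\neq0$, since $c=0$ would make the three right-hand functions dependent, and after rescaling the $\lambda_j$ by $c$ we obtain \eqref{cubthetaen5}. Applying $z\mapsto z+(\pi+\pi\tau)/2$ and \eqref{eqn2} then converts \eqref{cubthetaen5} into the $\vartheta_3$-form \eqref{cubthetaen5-5}, with the sign of the $\vartheta_3(3z|3\tau)$ coefficient flipped.

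To evaluate the coefficients I would work on the side of \eqref{cubthetaen5-5}, whose left-hand side expands as the triple sum $\sum_{k,\ell,m}q^{k^2+\ell^2+m^2}e^{2i((k-m)x+(\ell-m)y)}e^{2i(k+\ell+m)z}$. The decisive observation is that on the right-hand side the functions $\vartheta_3(3z|3\tau)$, $e^{2iz}\vartheta_3(3z+\pi\tau|3\tau)$ and $e^{-2iz}\vartheta_3(3z-\pi\tau|3\tau)$ carry only the frequencies $e^{2isz}$ with $s\equiv 0,1,-1\pmod 3$ respectively, so the three residue classes decouple and each $\lambda_j$ is pinned down by a single Fourier coefficient. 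Equating the constant term and substituting $m=-k-\ell$ collapses $k^2+\ell^2+m^2$ into $2(k^2+k\ell+\ell^2)$ and yields $\lambda_1=-A(x,y|\tau)$ as in \eqref{A}; equating the coefficient of $e^{-2iz}$ (the class $s=-1$, with $m=-1-k-\ell$) yields $\lambda_3=q^{1/3}C(x,y|\tau)$ directly, matching \eqref{C}.

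I expect the one genuinely delicate point to be the coefficient $\lambda_2$ of $e^{2iz}$, i.e.\ the class $s=1$. Here $m=1-k-\ell$, and completing the square in $k^2+\ell^2+(1-k-\ell)^2$ produces both the factor $q^{1/3}$ and the shifted quadratic exponent $2(k^2+k\ell+\ell^2-k-\ell+1/3)$, together with a translation of the $x,y$-exponentials. To recognize this as $q^{1/3}C(-x,-y|\tau)$ rather than a new series, I would reindex by $(k,\ell)\mapsto(-k,-\ell)$, which fixes $k^2+k\ell+\ell^2$, turns $-k-\ell$ into $+k+\ell$, and reflects the signs of $x$ and $y$ in the exponentials. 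Carrying out this simultaneous translation and reflection of the summation indices without error is the crux of the calculation. Once the three coefficients are in hand, substituting them into \eqref{cubthetaen5} and \eqref{cubthetaen5-5} yields \eqref{cubthetaen3} and \eqref{cubthetaen4} respectively.
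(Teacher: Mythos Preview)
Your proposal is correct and follows essentially the same route as the paper: establish the expansion \eqref{cubthetaen5} via the quasi-periodicity \eqref{quasi-rel} and an elliptic-function dimension count, invoke Lemma~\ref{independence} to ensure $c\neq0$, pass to the $\vartheta_3$-form \eqref{cubthetaen5-5} via $z\mapsto z+(\pi+\pi\tau)/2$, and then read off $\lambda_1,\lambda_2,\lambda_3$ by matching the Fourier coefficients in $z$ along the residue classes $s\equiv 0,\pm1\pmod 3$. Your explicit reindexing $(k,\ell)\mapsto(-k,-\ell)$ to recognise $\lambda_2=q^{1/3}C(-x,-y|\tau)$ is exactly the step the paper leaves implicit when it records $\lambda_2$ and $\lambda_3$ side by side.
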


\begin{proof}
Introduce $$T(z,x,y|\tau)=\vartheta_1(z+x|\tau)\vartheta_1(z+y|\tau)\vartheta_1(z-x-y|\tau).$$
This function is constructed so that it satisfies \eqref{quasi-rel},
since
$$T(z,x,y|\tau)= -q^{3}e^{2i(z+x+z+y+z-x-y)} T(z+\pi\tau,x,y|\tau)=-q^3e^{6iz}T(z+\pi\tau,x,y|\tau).$$
Therefore, the functions
\begin{align*}
\frac{\vartheta_1(3z|3\tau)}{T(z,x,y|\tau)}, \quad
\frac{e^{2iz}\vartheta_1(3z+\pi\tau|3\tau)}{T(z,x,y|\tau)} \quad\text{and}\quad
\frac{e^{-2iz}\vartheta_1(3z-\pi\tau|3\tau)}{T(z,x,y|\tau)}
\end{align*}
are all elliptic functions with
periods $\pi$ and $\pi\tau$ and with 3~poles in $\Lambda$.
Since an elliptic function with a simple pole must be a constant, we conclude that there exists
$c, \lambda_1, \lambda_2$ and $\lambda_3$ which are  dependent on $x$ and $y$ but independent of $z$  such that
$$cT(z,x,y|\tau)
=\lambda_1\vartheta_1(3z|3\tau)
+\lambda_2 e^{2iz}\vartheta_1(3z+\pi\tau|3\tau)+\lambda_3 e^{-2iz}\vartheta_1(3z-\pi\tau|3\tau).$$
Note that by Lemma \ref{independence}, $c\neq 0$.
By replacing $\lambda_j$ by $\lambda_j/c$, we find that
\begin{align}
&
\vartheta_1(z+x|\tau)\vartheta_1(z+y|\tau)\vartheta_1(z-x-y|\tau)
\notag\\ &\qquad
=\lambda_1\vartheta_1(3z|3\tau) +\lambda_2  e^{2iz}\vartheta_1(3z+\pi\tau|3\tau)
+\lambda_3  e^{-2iz}\vartheta_1(3z-\pi\tau|3\tau). \label{cubthetaen5}
\end{align}
Replacing $z$ by $z+(\pi+\pi\tau)/2$ and using \eqref{eqn2}, we may rewrite \eqref{cubthetaen5} as
\begin{align}
&\vartheta_3(z+x|\tau)\vartheta_3(z+y|\tau)\vartheta_3(z-x-y|\tau) \notag\\
&\qquad=-\lambda_1\vartheta_3(3z|3\tau) +\lambda_2  e^{2iz}\vartheta_3(3z+\pi\tau|3\tau)
+\lambda_3  e^{-2iz}\vartheta_3(3z-\pi\tau|3\tau). \label{cubthetaen5-5}
\end{align}
Identity \eqref{cubthetaen5} written in the form \eqref{cubthetaen5-5} allows us to determine $\lambda_1,\lambda_2$ and $\lambda_3$.
Substituting the series representation of $\vartheta_3(z|\tau)$ into \eqref{cubthetaen5-5} and equating the constant terms, the coefficients
of $e^{2iz}$ and $e^{-2iz}$, we deduce that
\begin{align*}
\lambda_1&=-\sum_{\substack{k,\ell, m\in \mathbf Z\\ k+\ell+m=0}} q^{\ell^2+k^2+m^2}e^{2i((\ell-m)x+(k-m)y)}
\\ & =
-\sum_{\substack{k,\ell\in \mathbf Z}} q^{2(\ell^2+\ell k+k^2)}e^{2i(2\ell+k)x}e^{2i(2k+\ell)y},\\
\lambda_2&= \sum_{\substack{k,\ell, m\in \mathbf Z\\ k+\ell+m=1}} q^{\ell^2+k^2+m^2}e^{2i((\ell-m)x+(k-m)y)} \\
&= q^{1/3}e^{-2ix}e^{-2iy}\sum_{\substack{k,\ell\in \mathbf Z}} q^{2(\ell^2+\ell k+k^2-k-\ell+1/3)}e^{2i(2\ell+k)x}e^{2i(2k+\ell)y}\intertext{and}
\lambda_3&=\sum_{\substack{k,\ell, m\in \mathbf Z\\ k+\ell+m=-1}} q^{\ell^2+k^2+m^2}e^{2i((\ell-m)x+(k-m)y)} \\
&=q^{1/3}e^{2ix}e^{2iy}\sum_{\substack{k,\ell\in \mathbf Z}} q^{2(\ell^2+\ell k+k^2+k+\ell+1/3)}e^{2i(2\ell+k)x}e^{2i(2k+\ell)y}.
\end{align*}
Thus, the required identities \eqref{cubthetaen3} and \eqref{cubthetaen4} follow from \eqref{cubthetaen5} and \eqref{cubthetaen5-5}.
\end{proof}

We are now ready to prove Theorem \ref{th-Schultz}.

\begin{proof}[First proof of Theorem~\ref{th-Schultz}]
First, let $z=-\pi \tau/3$ in \eqref{cubthetaen3} to deduce that
\begin{align*}
&\vartheta_1(-\pi\tau/3+x|\tau)\vartheta_1(-\pi\tau/3+y|\tau)\vartheta_1(-\pi\tau/3-x-y|\tau)
\
\\ &\qquad\qquad
\qquad=-A(x, y|\tau)\vartheta_1(-\pi\tau|3\tau)
+q C(x, y|\tau) \vartheta_1(-2\pi\tau|3\tau).\nonumber
\end{align*}
By \eqref{JTP-T1}, we know that
\begin{equation}\label{cubthetaen7}
\vartheta_1(-\pi\tau|3\tau)=-i q^{-1/3} \eta(\tau)\quad\text{and}\quad \vartheta_1(-2\pi\tau|3\tau)=-i q^{-4/3} \eta(\tau),
\end{equation}
where $$\eta(\tau)=q^{1/12}\prod_{k=1}^\infty (1-q^{2k}).$$
Therefore,
$$-\dfrac{iq^{1/3}}{\eta(\tau)}\vartheta_1(-\pi\tau/3+x|\tau)
\vartheta_1(-\pi\tau/3+y|\tau)\vartheta_1(-\pi\tau/3-x-y|\tau)
=A(x, y|\tau)
-C(x, y|\tau),$$
which, after replacing $\tau$ by $3\tau$, implies that
\begin{equation*}
A(x, y|3\tau)
-C(x, y|3\tau)=-\dfrac{iq}{\eta(3\tau)}\vartheta_1(-\pi\tau+x|3\tau)
\vartheta_1(-\pi\tau+y|3\tau)\vartheta_1(-\pi\tau-x-y|3\tau).\end{equation*}
Using \eqref{JTP-T1}, we deduce that
\begin{align}&
A(x,y|3\tau)-C(x,y|3\tau)=\prod_{k=1}^\infty (1-q^{6k})^2(1-q^{6k-2}e^{2ix}) (1-q^{6k-4}e^{-2ix})\notag\\&\qquad\times
\prod_{k=1}^\infty (1-q^{6k-2}e^{2iy})(1-q^{6k-4}e^{-2iy})
(1-q^{6k-2}e^{2i(-x-y)})
(1-q^{6k-4}e^{2i(x+y)}).\label{key-to-Schultz}
\end{align}
Next, note that
$$C(x,y|3\tau)=q^2 C^*(x,y|3\tau)$$ where
$C^*(x,y|3\tau)$ is a function of $q^6$.
Therefore,
\begin{align}&A^3(x,y|3\tau)-C^3(x,y|3\tau)=A^3(x,y|3\tau)-q^6 {C^*}^3(x,y|3\tau) \notag \\
&\qquad = \prod_{j=0}^2 \left(A(x,y|3\tau)-e^{2\pi i j/3}q^2 C^*(x,y|3\tau)\right) \notag \\
&\qquad =\prod_{k=1}^\infty (1-q^{6k})^6
(1-q^{18k-6}e^{6ix})(1-q^{18k-12}e^{-6ix})(1-q^{18k-6}e^{6iy}) \notag \\
&\qquad\times \prod_{k=1}^\infty
(1-q^{18k-12}e^{-6iy})(1-q^{18k-6}e^{6i(-x-y)})
(1-q^{18k-12}e^{6i(x+y)}).\label{product-cube}
\end{align}
But observe that
$A^3(x+\pi/3,y|3\tau)-C^3(x+\pi/3,y|3\tau)$ has the same product representation as $A^3(x,y|3\tau)-C^3(x,y|\tau)$. Therefore,
$$A^3(x,y|3\tau)+C^3(x+\pi/3,y|3\tau)
=A^3(x+\pi/3,y|3\tau)+C^3(x,y|3\tau)$$ or
\begin{equation}\label{Borweins-xy-1}
A^3(x,y|\tau)+C^3(x+\pi/3,y|\tau)
=A^3(x+\pi/3,y|\tau)+C^3(x,y|\tau).
\end{equation}
Identity \eqref{Borweins-xy-1} is equivalent to Schultz's identity \eqref{Borweins-xy}.
\end{proof}

\begin{remark}
The proof of \eqref{product-cube} using \eqref{key-to-Schultz} is inspired by the proof of Theorem~2.3 in \cite{Borweins-Garvan}.
The final step in the above proof of Theorem~\ref{th-Schultz}, using the fact that the product on the right hand side of \eqref{product-cube} can be written as
a difference of the cube of two theta series in two ways, is inspired by the proof of Corollary~4.2 in \cite{Ma-Wei}.
\end{remark}

\section{Analogues of Schultz's identity}
\label{s3}

In this section, we derive several identities analogous to \eqref{Borweins-xy}.

\begin{theorem}
\label{th3.1}
Let $C(x,y|\tau)$ be given by \eqref{C}. Then
\begin{equation}\label{Borweins-xy-variant-1}
C^3(x, y|\tau)-C^3(-x, -y|\tau)=C^3(x+\pi/3, y|\tau)-C^3(-x-\pi/3, -y|\tau).
\end{equation}
\end{theorem}

\begin{proof}
By setting $z=0$ in \eqref{cubthetaen3} and simplifying using \eqref{cubthetaen7}, we deduce that
\begin{equation}\label{cubthetaen8}
C(x, y|\tau)-C(-x, -y|\tau)=\frac{-i}{\eta(\tau)} \vartheta_1(x|\tau)\vartheta_1(y|\tau)\vartheta_1(x+y|\tau),
\end{equation}
since $\vartheta_1(0|3\tau)=0$.
Next, set $z=\pi/3$ in \eqref{cubthetaen3}. Note that $\vartheta_1(\pi|3\tau)=0$ and therefore, simplifying
using \eqref{cubthetaen7}, we deduce that
\begin{equation}\label{cubthetaen9}
C(x, y|\tau)-\omega^2 C(-x, -y|\tau)=\frac{-i\omega}{\eta(\tau)} \vartheta_1\left(\frac{\pi}{3}+x|\tau\right)\vartheta_1\left(\frac{\pi}{3}+y|\tau\right)\vartheta_1\left(\frac{\pi}{3}-x-y|\tau\right)
\end{equation}
where $\omega=\exp (2\pi i/3)$. From \eqref{cubthetaen8} and \eqref{cubthetaen9}, we find that
\begin{align}\label{cubthetaen10}
C(x, y|\tau)&=\frac{-\omega}{\sqrt{3}\eta(\tau)} \vartheta_1(x|\tau)\vartheta_1(y|\tau)\vartheta_1(x+y|\tau)\\
&\quad +\frac{1}{\sqrt{3}\eta(\tau)} \vartheta_1\left(\frac{\pi}{3}+x|\tau\right)\vartheta_1\left(\frac{\pi}{3}+y|\tau\right)\vartheta_1\left(\frac{\pi}{3}-x-y|\tau\right).\nonumber
\end{align}
Replacing $(x, y)$ by $(-x, -y)$ in \eqref{cubthetaen9} and simplifying, we find that
\begin{equation}\label{cubthetaen11}
C(x, y|\tau)-\omega C(-x, -y|\tau)=\frac{i\omega^2}{\eta(\tau)} \vartheta_1\left(\frac{\pi}{3}-x|\tau\right)\vartheta_1\left(\frac{\pi}{3}-y|\tau\right)\vartheta_1\left(\frac{\pi}{3}+x+y|\tau\right).
\end{equation}
Using \eqref{JTP-T1}, we deduce that
\begin{equation}\label{cubthetaen12}
\vartheta_1(z|\tau)\vartheta_1\left(\frac{\pi}{3}+z|\tau\right)\vartheta_1\left(\frac{\pi}{3}-z|\tau\right)
=\frac{\eta^3(\tau)}{\eta(3\tau)} \vartheta_1(3z|3\tau).
\end{equation}
Multiplying \eqref{cubthetaen8}, \eqref{cubthetaen9} and \eqref{cubthetaen11} and using
\eqref{cubthetaen12}, we immediately find that
\begin{equation}\label{cubthetaen13}
C^3(x, y|\tau)-C^3(-x, -y|\tau)=-i\frac{\eta^6(\tau)}{\eta^3(3\tau)}
\vartheta_1(3x|3\tau)\vartheta_1(3y|3\tau)\vartheta_1(3x+3y|3\tau).
\end{equation}
The right-hand side of \eqref{cubthetaen13} is invariant under the change $x\to x+\pi/3$, and this leads to the required identity \eqref{Borweins-xy-variant-1}, which is a variant of \eqref{Borweins-xy-1}.
\end{proof}

\begin{corollary}
\label{th3.2}
We have the identity
\begin{equation}\label{cubthetaen15}
\frac{\eta^9(\tau)}{\eta^3(3\tau)}
=\left(\sum_{m, n=-\infty}^{\infty} q^{2(m^2+mn+n^2)}\right)^3
-\left(\sum_{m, n=-\infty}^{\infty} q^{2(m^2+mn+n^2+m+n+1/3)}\right)^3.
\end{equation}	
\end{corollary}

\begin{proof}
Let $x=y=\pi\tau/3$ in \eqref{cubthetaen13}. Using \eqref{cubthetaen7}, we find that
\begin{equation}\label{cubthetaen14}
q^2C^3(-\pi\tau/3, -\pi\tau/3|\tau)-q^2C^3(\pi\tau/3, \pi\tau/3|\tau)
=\frac{\eta^9(\tau)}{\eta^3(3\tau)}.
\end{equation}	
Using the definition $C(x, y|\tau)$ given by \eqref{C} and simplifying, we deduce that \eqref{cubthetaen14} is equivalent to~\eqref{cubthetaen15}.
\end{proof}

\begin{corollary}
\label{th3.3}
The following identity is valid:
\begin{equation}\label{b(q)-prod}
\frac{\eta^3(\tau)}{\eta(3\tau)}=\sum_{m, n=-\infty}^{\infty} \omega^{m-n} q^{2(m^2+mn+n^2)}.
\end{equation}
\end{corollary}

\begin{proof}
Replacing $z$ by $z+(\pi+\pi\tau)/2$ in \eqref{cubthetaen12}, we find that
\begin{equation}\label{cubthetaen16}
\vartheta_3(z|\tau)\vartheta_3\left(\frac{\pi}{3}+z|\tau\right)\vartheta_3\left(\frac{\pi}{3}-z|\tau\right)
=\frac{\eta^3(\tau)}{\eta(3\tau)} \vartheta_3(3z|3\tau).
\end{equation}
Equating the constant terms on both sides, we obtain identity \eqref{b(q)-prod}.
\end{proof}

Combining \eqref{b(q)-prod} and \eqref{cubthetaen15}, we arrive at \eqref{Borweins}.
This also shows that \eqref{Borweins-xy-variant-1} may be viewed as
a new generalization of \eqref{Borweins}.

\begin{remark}
For an elegant proof of Theorem~\ref{th3.3}, see \cite[Proposition 2.2(i)]{Borweins-Garvan}.
\end{remark}

\begin{corollary}
\label{th3.4}
The following is valid:
\begin{equation}
\frac{\eta^3(3\tau)}{\eta(\tau)} = \frac13\sum_{m, n=-\infty}^{\infty}q^{2(m^2+mn+n^2+m+n+1/3)}.
\label{id-th3.4}
\end{equation}
\end{corollary}

\begin{proof}
Setting $x=y$ in \eqref{cubthetaen8}, we find that
\begin{equation}\label{cubthetaen18}
\sum_{m, n=-\infty}^{\infty}q^{2(m^2+mn+n^2+m+n+1/3)} \sin(2(3m+3n+2)x)=\frac{-1}{2\eta(\tau)} \vartheta_1^2(x|\tau)\vartheta_1(2x|\tau).
\end{equation}
Letting $x=\pi/3$ in \eqref{cubthetaen18} and using $\vartheta_1(\pi/3|\tau)=\sqrt{3}\eta(3\tau)$, we arrive at \eqref{id-th3.4}.
\end{proof}

\begin{corollary}
\label{th3.5}
We have the identity
\begin{align}
-3\sqrt{3} i \prod_{n=1}^{\infty} \frac{(1-q^{2n})^6 (1-q^{18n})^3}{(1-q^{6n})^3}
&=\omega^2\left(\sum_{m, n=-\infty}^{\infty} q^{2(m^2+mn+n^2+m+n)} \omega^{m+n}\right)^3
\nonumber\\ &\kern-6mm
-\omega^{-2}\left(\sum_{m, n=-\infty}^{\infty} q^{2(m^2+mn+n^2+m+n)} \omega^{-m-n}\right)^3.\label{newcubthetan1}
\end{align}
\end{corollary}

\begin{proof}
This follows from setting $x=y=\pi/9$ in \eqref{cubthetaen13}  and once again using
$\vartheta_1(\pi/3|\tau)=\sqrt{3}\eta(3\tau)$.
\end{proof}

Identity \eqref{newcubthetan1} is of the form
\begin{equation}\label{cubic-theta-pythagoras} a X^3+bY^3=cZ^3\end{equation}
where $X,Y,Z$ are weight one modular forms and $a,b,c\in \mathbf C$.
Note that Borweins' identity \eqref{Borweins} also satisfies \eqref{cubic-theta-pythagoras}
with $a=b=c=1$.

\medskip
In the remaining of this section, we continue our search for identities similar to \eqref{Borweins-xy}.
Recall from \cite[(5), p.~205]{Schoeneberg} that if $M$ is a real symmetric $f\times f$ matrix, $\mathbf m\in \mathbf Z^f$ written as a column vector,
$\mathbf x$ is a column vector with $f$ complex variable components $x_1,x_2,\cdots, x_f$, then for $\operatorname{Im}\tau>0$, we have
\begin{equation}\label{Sc-T}
\sum_{\mathbf m\in \mathbf Z^f} e^{\pi i \tau(\mathbf m+\mathbf x)^tM(\mathbf m+\mathbf x)}
=\dfrac{1}{(\sqrt{-i\tau})^f|\text{det}(M)|^{1/2}}
\sum_{\mathbf m\in \mathbf Z^f} e^{-\pi i (\mathbf m^t M^{-1} \mathbf m)/\tau+2\pi i\mathbf m^t\mathbf x}.
\end{equation}
Let $f=2$, $M=\begin{pmatrix} 2 & 1\\ 1 & 2\end{pmatrix}$, $\mathbf m=(m,n)^t$ and $\mathbf x=(x/(\pi\tau),y/(\pi\tau))^t$ in \eqref{Sc-T}. We deduce immediately that if
\begin{equation}\label{Adagger}
A^{\dagger}(x,y|\tau) =
\sum_{m,n\in \mathbf Z} q^{2(m^2-mn+n^2)}e^{2imx}e^{2iny},
\end{equation}
then
$$A(x,y|\tau) =\dfrac{1}{-i\tau\sqrt{3}}\exp\left(-\dfrac{2i(x^2+xy+y^2)}{\pi\tau}\right)A^\dagger\left(\dfrac{x}{\tau},\dfrac{y}{\tau}\bigg|-\dfrac{1}{3\tau}\right),$$
which is equivalent to
\begin{equation}\label{trans-A} A\left(\dfrac{x}{3\tau},\dfrac{y}{3\tau}\bigg|-\dfrac{1}{3\tau}\right) =-i\sqrt{3}\tau \exp\left(\dfrac{2i(x^2+xy+y^2)}{3\pi\tau}\right)A^\dagger\left(-x,-y|\tau\right).
\end{equation}
Observe that if we replace $m$ by $m+n$ in \eqref{Adagger}, then
\begin{equation}\label{Adagger-2} A^{\dagger}(x,y|\tau)=
\sum_{m,n\in \mathbf Z} q^{2(m^2+mn+n^2)}e^{2i(m+n)x}e^{2iny}.
\end{equation}
From now on, we will use \eqref{Adagger-2} instead of \eqref{Adagger}.
Similarly, if
\begin{equation}\label{Cdagger}
C^{\dagger}(x,y|\tau) = \sum_{m,n\in \mathbf Z} \omega^{m+n} q^{2(m^2-mn+n^2)}e^{2imx}e^{2iny},
\end{equation}
then
%$$C(x,y|\tau) =\dfrac{1}{-i\tau\sqrt{3}}\exp\left(\dfrac{2(x^2+xy+y^2)}{\pi\tau}\right)C^\dagger\left(\dfrac{x}{\tau},\dfrac{y}{\tau}\bigg|-\dfrac{1}{3\tau}\right).$$
\begin{equation}\label{trans-C} C\left(\dfrac{x}{3\tau},\dfrac{y}{3\tau}\bigg|-\dfrac{1}{3\tau}\right) =-i\sqrt{3}\tau \exp\left(\dfrac{2i(x^2+xy+y^2)}{3\pi\tau}\right)C^\dagger\left(-x,-y|\tau\right).
\end{equation}
Observe that if we replace $m$ by $m+n$ in \eqref{Adagger}, then
$$C^{\dagger}(x,y|\tau)= \sum_{m,n\in \mathbf Z} \omega^{m-n} q^{2(m^2+mn+n^2)}e^{2i(m+n)x}e^{2iny}.$$

\begin{theorem}\label{thm2} The following identities hold:
\begin{align*}
&3\vartheta_1(z+x|\tau)\vartheta_1(z+y|\tau)\vartheta_1(z-x-y|\tau)=A^{\dagger}\left(-x,-y\Big|\frac{\tau}{3}\right)\vartheta_1\left(z\Big| \frac{\tau}{3}\right)
\notag  \\
&\qquad\qquad -C^{\dagger}\left(x, y\Big|\frac{\tau}{3}\right)\vartheta_1\left(z-\frac{\pi}{3}\Big| \frac{\tau}{3}\right)
-C^{\dagger}\left(-x, -y\Big|\frac{\tau}{3}\right)\vartheta_1\left(z+\frac{\pi}{3}\Big| \frac{\tau}{3}\right),
\end{align*}
\begin{align*}
&3\vartheta_3(z+x|\tau)\vartheta_3(z+y|\tau)\vartheta_3(z-x-y|\tau)=A^{\dagger}\left(-x, -y\Big|\frac{\tau}{3}\right)\vartheta_3\left(z\Big| \frac{\tau}{3}\right)
\notag  \\
&\qquad\qquad +C^{\dagger}\left(x, y\Big|\frac{\tau}{3}\right)\vartheta_3\left(z-\frac{\pi}{3}\Big| \frac{\tau}{3}\right)
+C^{\dagger}\left(-x, -y\Big|\frac{\tau}{3}\right)\vartheta_3\left(z+\frac{\pi}{3}\Big| \frac{\tau}{3}\right),
\end{align*}
\begin{align*}
&\left(C^\dagger(x,y|\tau)\right)^3-
\left(C^\dagger(-x,-y|\tau)\right)^3 \\
&\qquad \qquad =q^2e^{4ix+2iy}\left(C^\dagger(x+\pi\tau,y|\tau)\right)^3-
q^2e^{4ix+2iy}\left(C^\dagger(-x-\pi\tau,-y|\tau)\right)^3
\intertext{and}
&\left(A^\dagger(x,y|\tau)\right)^3-
\left(C^\dagger(x,y|\tau)\right)^3 \\
&\qquad \qquad =q^2e^{4ix+2iy}\left(A^\dagger(x+\pi\tau,y|\tau)\right)^3-
q^2e^{4ix+2iy}\left(C^\dagger(x+\pi\tau,y|\tau)\right)^3.
\end{align*}
\end{theorem}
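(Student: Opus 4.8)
The plan is to obtain all four identities by subjecting the two identities of Theorem~\ref{thm1} to the modular substitution $\tau\mapsto-1/\tau$ accompanied by the rescaling $(z,x,y)\mapsto(z/\tau,x/\tau,y/\tau)$, and then rewriting every factor that appears by means of the three transformation formulas \eqref{imeqn1}, \eqref{trans-A} and \eqref{trans-C}. This substitution is chosen precisely so that the triple product on the left of \eqref{cubthetaen3} is carried, via \eqref{imeqn1}, into a constant multiple of $\vartheta_1(z+x|\tau)\vartheta_1(z+y|\tau)\vartheta_1(z-x-y|\tau)$, while the functions $A$ and $C$ at modulus $-1/\tau$ become, after a further replacement $\tau\mapsto\tau/3$ in \eqref{trans-A} and \eqref{trans-C}, the dagger functions $A^{\dagger}$ and $C^{\dagger}$ at modulus $\tau/3$, and the theta functions $\vartheta_1(3z|3\tau)$, $\vartheta_1(3z\pm\pi\tau|3\tau)$ become $\vartheta_1(z|\tau/3)$, $\vartheta_1(z\mp\pi/3|\tau/3)$.

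First I would transform \eqref{cubthetaen3} term by term. Each of the three theta factors on the left contributes a prefactor $\sqrt{-i\tau}$ and a Gaussian $\exp(i(\cdot)^2/(\pi\tau))$; since $(z+x)^2+(z+y)^2+(z-x-y)^2=3z^2+2(x^2+xy+y^2)$, the total Gaussian on the left is $\exp(i(3z^2+2(x^2+xy+y^2))/(\pi\tau))$. On the right, the $A$-term produces the Gaussian $\exp(2i(x^2+xy+y^2)/(\pi\tau))$ from \eqref{trans-A} together with $\exp(3iz^2/(\pi\tau))$ from \eqref{imeqn1}, which already matches the left. The crux is the two shifted terms: there the Gaussian coming from the shifted theta $\vartheta_1(z\mp\pi/3|\tau/3)$, the factor $e^{2iz/\tau}$, and the factor $q^{1/3}$ (which under $\tau\mapsto-1/\tau$ becomes $e^{-\pi i/(3\tau)}$) must conspire so that all the $z$-linear and $z$-independent pieces cancel, leaving again exactly the same Gaussian. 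I expect this cancellation, which I would verify by expanding $(z\mp\pi/3)^2$ and collecting, to be the main technical obstacle. Once the common Gaussian is divided out, the surviving constant prefactors combine---using $(\sqrt{-i\tau})^2=-i\tau$, so that the net power of $\sqrt{-i\tau}$ in each ratio is even and no branch ambiguity remains---into the factor $1/3$ (up to sign) attached to each term, which accounts for the factor $3$ on the left of the stated identity; careful tracking of the signs of $-i\tau/\sqrt3$ from \eqref{trans-A}, \eqref{trans-C} and of the explicit signs in \eqref{cubthetaen3} then fixes the three coefficients. The second identity follows identically from \eqref{cubthetaen4}, the only change being the use of \eqref{imeqn1} for $j=3$ in place of $j=1$.

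For the two cube identities I would apply the very same substitution to the already-established identities \eqref{Borweins-xy-1} and \eqref{Borweins-xy-variant-1}. Writing \eqref{Borweins-xy-1} as $A^3(x,y|\tau)-C^3(x,y|\tau)=A^3(x+\pi/3,y|\tau)-C^3(x+\pi/3,y|\tau)$ and inserting $x\mapsto x/\tau$, $y\mapsto y/\tau$, $\tau\mapsto-1/\tau$, both $A$ and $C$ acquire the common cubed prefactor $(-i\tau/\sqrt3)^3\exp(6i(x^2+xy+y^2)/(\pi\tau))$, which cancels in the difference on each side. The only genuinely new point is that the shift $x\mapsto x+\pi/3$ at modulus $\tau$ turns, under the rescaling, into the shift $x\mapsto x+\pi\tau/3$ inside the dagger functions at modulus $\tau/3$; the ratio of the two cubed Gaussians is then
\[
\exp\!\Big(\frac{6i}{\pi\tau}\big[(x+\tfrac{\pi\tau}{3})^2+(x+\tfrac{\pi\tau}{3})y-x^2-xy\big]\Big)=e^{4ix+2iy}q^{2/3},
\]
so that renaming $\tau/3$ as $\tau$ (that is, $\tau\mapsto3\tau$) sends $q^{2/3}$ to $q^2$ and $\pi\tau/3$ to $\pi\tau$, producing exactly the quasi-period multiplier $q^2e^{4ix+2iy}$ and the shift $x\mapsto x+\pi\tau$ in the fourth identity. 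The third identity arises in the same way from \eqref{Borweins-xy-variant-1}, where \eqref{trans-C} interchanges $C^{\dagger}(x,y|\tau/3)$ and $C^{\dagger}(-x,-y|\tau/3)$ (because $C(x/\tau,y/\tau|-1/\tau)$ transforms to $C^{\dagger}(-x,-y|\tau/3)$), so that the pairing of $C(x,y)$ against $C(-x,-y)$ is preserved. I expect the bookkeeping of these constant and Gaussian prefactors, rather than any conceptual difficulty, to be the principal source of error, and would double-check the final signs by comparing leading $q$-coefficients, or by specializing $z$ (for instance $z=0$, where $\vartheta_1(0|\tau/3)=0$ annihilates the $A^{\dagger}$ term).
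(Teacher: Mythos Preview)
Your proposal is correct and follows exactly the route the paper indicates: the paper's entire proof is the sentence ``Using \eqref{trans-A}, \eqref{trans-C} and the transformation formula \eqref{imeqn1} for $j=1,3$, we deduce the following identities from Theorem~\ref{thm1},'' and you have supplied the detailed bookkeeping of Gaussians and prefactors that this sentence suppresses. Your observation that the two cube identities actually require \eqref{Borweins-xy-1} and \eqref{Borweins-xy-variant-1} (not just Theorem~\ref{thm1}) as input to the modular substitution is correct and sharpens the paper's slightly imprecise attribution.
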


\begin{proof}
Using \eqref{trans-A}, \eqref{trans-C} and the transformation formula \cite[p.~475]{Watson-Whittaker}, \cite[p.~35]{Siegel}
\begin{align}\label{imeqn1}
\vartheta_j\left(\frac{z}{\tau}\Big|-\frac{1}{\tau}\right)=\sqrt{-i\tau} \exp\left(\frac{iz^2}{\pi \tau}\right) \vartheta_j(z|\tau),
\end{align}
for $j=1,3$, we deduce the identities claimed from Theorem~\ref{thm1}.
\end{proof}

\section{Second proof of Schultz's identity}
\label{s4}

We will now give a second proof of \eqref{key-to-Schultz} which leads to a second proof  of Theorem \ref{th-Schultz}.

\begin{proof}[Second proof of Theorem~\ref{th-Schultz}]
First, we observe that
$$\{(m,n)\mid m,n\in \mathbf Z\}
=\{(k+\ell,k-\ell)\mid k,\ell\in \mathbf Z\}\cup \{(k+\ell+1,k-\ell)\mid k,\ell\in \mathbf Z\}.$$
Let $\gamma=\exp(\pi i/3)=(1+\sqrt{-3})/2$. Note that $\gamma$ is a sixth-root of unity and $\gamma^2=\omega$. Let
$$N(m+n\gamma) = (m+n\gamma)(m+n\overline{\gamma})=m^2+mn+n^2.$$
Therefore,
\begin{align*}\sum_{m,n\in \mathbf Z} &q^{m^2+mn+n^2}e^{iu(m+n)}e^{iv(m-n)} \\
&=\sum_{m,n\in \mathbf Z} q^{N(m+n\gamma)}e^{iu(m+n)}e^{iv(m-n)} \\
&=\sum_{k,\ell\in \mathbf Z} q^{N((k+\ell)+(k-\ell)\gamma)}e^{2kiu}e^{2\ell iv}
+\sum_{k,\ell\in \mathbf Z} q^{N((k+\ell+1)+(k-\ell)\gamma)}e^{iu(2k+1)}e^{iv(2\ell+1)}
 \\
&=\sum_{k,\ell\in \mathbf Z} q^{3k^2+\ell^2}e^{2iku}e^{2i\ell v}
+\sum_{k,\ell\in \mathbf Z} q^{3(k+1/2)^2+(\ell+1/2)^2}e^{iu(2k+1)}e^{iv(2\ell +1)} \\
&=\vartheta_3(u|3\tau)\vartheta_3(v|\tau)+\vartheta_2(u|3\tau)\vartheta_2(v|\tau)
\end{align*}
or
\begin{equation}\label{A-theta}\sum_{m,n\in \mathbf Z} q^{2(m^2+mn+n^2)}e^{2iu(m+n)}e^{2iv(m-n)}
=\vartheta_3(2u|6\tau)\vartheta_3(2v|2\tau)+\vartheta_2(2u|6\tau)\vartheta_2(2v|2\tau).
\end{equation}
 Note that by the transformation $u=3(x+y)/2$ and $v=(x-y)/2$ in \eqref{A-theta} and using \eqref{A}, we deduce that
\begin{equation}\label{A-theta-100}
A(x,y|\tau)
=\vartheta_3(3x+3y|6\tau)\vartheta_3(x-y|2\tau)+\vartheta_2(3x+3y|6\tau)\vartheta_2(x-y|2\tau).
\end{equation}
Identity \eqref{A-theta-100} shows that in order to prove \eqref{key-to-Schultz}, it suffices to show that
\begin{align}
&\vartheta_3(6u|6\tau)\vartheta_3(2v|2\tau)+\vartheta_2(6u|6\tau)\vartheta_2(2v|2\tau)\notag
\\
&\qquad - q^{2/3}e^{4iu}\left(
\vartheta_3(6u+2\pi\tau|6\tau)\vartheta_3(2v|2\tau)+
\vartheta_2(6u+2\pi\tau|6\tau)\vartheta_2(2v|2\tau)\right)\notag \\
&\quad=-e^{4iu}q^{2/3}\dfrac{i}{\eta(\tau)}\vartheta_1(2u-2\pi\tau/3|\tau)\vartheta_1(u+v-\pi\tau/3|\tau)\vartheta_1(u-v-\pi\tau/3|\tau).
\label{CCT-Y}
\end{align}
We will now show that \eqref{CCT-Y} follows from an identity found by Chan, Cooper and Toh \cite[Theorem 3.1]{Chan-Cooper-Toh} given by
\begin{equation}\label{CCT}
G_2(2x|2\tau)\vartheta_2(2y|2\tau)+G_3(2x|2\tau)\vartheta_3(2y|2\tau)
=\dfrac{1}{\eta(\tau)}\vartheta_1(2x|\tau)\vartheta_1(x+y|\tau)
\vartheta_1(x-y|\tau),
\end{equation}
where
$$G_2(2z|2\tau) = 2\sum_{\alpha\equiv 1 \pmod{6}}\!\!\!\!\!\! q^{\alpha^2/6}
\sin{2\alpha z}$$
and
$$G_3(2z|2\tau) = 2\sum_{\alpha\equiv -2 \pmod{6}} \!\!\!\!\!\! q^{\alpha^2/6}
\sin{2\alpha z}.$$

Using $\sin{u}=(e^{iu}-e^{-iu})/(2i)$, we find that
$$G_2(2u|2\tau) = \dfrac{1}{i}\left(G_{2,1}(2u|2\tau)-G_{2,2}(2u|2\tau)\right)$$
where
$$G_{2,1}(2u|2\tau) = \sum_{k=-\infty}^\infty q^{6k^2+2k+1/6}e^{2iu(6k+1)}$$
and
$$G_{2,2}(2u|2\tau) = \sum_{k=-\infty}^\infty q^{6k^2+2k+1/6}e^{-2iu(6k+1)}.$$
Next, replace $u$  by $u-\pi\tau/3$.
One can verify that
$$G_{2,1}(2u-2\pi\tau/3|2\tau) = e^{8iu}\vartheta_2(6u+2\pi\tau|6\tau)$$
and
$$G_{2,2}(2u-2\pi\tau/3|2\tau) = q^{-2/3}e^{4iu}\vartheta_2(6u|6\tau).$$
Similarly, if we write
$$G_3(2u|2\tau) = \dfrac{1}{i}\left(G_{3,1}(2u|2\tau)-G_{3,2}(2u|2\tau)\right)$$
where
$$G_{3,1}(2u|2\tau) = \sum_{k=-\infty}^\infty q^{6k^2-4k+2/3}e^{2iu(6k-2)}$$
and
$$G_{3,2}(2u|2\tau) = \sum_{k=-\infty}^\infty q^{6k^2-4k+2/3}e^{-2iu(6k-2)},$$
we find that
$$G_{3,1}(2u-2\pi\tau/3|2\tau) = e^{8iu}\vartheta_3(6u+2\pi\tau|6\tau)$$
and
$$G_{3,2}(2u-2\pi\tau/3|2\tau) = q^{-2/3}e^{4iu}\vartheta_3(6u|6\tau).$$
Substituting the above into \eqref{CCT}, we arrive at \eqref{CCT-Y}.
\end{proof}

\section{Generalizations of Schult'z identity}
\label{s5}

Recently, Chan,  Chan and Sole discover the identities  \cite[(1.5), (1.6)]{Chan-Chan-Sole},
\begin{align}
&\left(\sum_{m,n=-\infty}^\infty (-1)^{m+n} q^{dm^2+bmn+dn^2}\right)^2
+ \left(2\sum_{m,n=-\infty}^\infty q^{2(d(m+1/2)^2+b(m+1/2)n+dn^2)}\right)^2\notag\\
&\qquad\qquad\qquad\qquad =\left(\sum_{m,n=-\infty}^\infty q^{dm^2+bmn+dn^2}
\right)^2\label{ChanChanSole-id}
\end{align}
and
\begin{align}&\left(\sum_{m,n=-\infty}^\infty (-1)^{m+n} q^{bm^2+bmn+dn^2}\right)^2
+\left(\sum_{m,n=-\infty}^\infty q^{2(b(m+1/2)^2+b(m+1/2)n+dn^2)}\right)^2\notag\\
&\qquad\qquad\qquad\qquad=\left(\sum_{m,n=-\infty}^\infty q^{2(bm^2+bmn+dn^2)}\right)^2, \label{CCS-gen}
 \end{align}
 where $b$ and $d$ satisfy certain conditions.
It is natural to ask if there are any two-variable extensions for \eqref{ChanChanSole-id} and \eqref{CCS-gen} similar to the generalization for
\eqref{Jacobi} and \eqref{Borweins}.
The answer is affirmative.

\begin{theorem}\label{AC2thm}
Let
\begin{align} \label{A2def}
&A_{d,b}(u,v|\tau)=\sum_{m,n\in\mathbf{Z}} q^{d m^2+b mn+d n^2}e^{2iu(m+n)} e^{2iv(m-n)}
\intertext{and} \nonumber
&C_{d,b}(u,v|\tau)\\  \label{C2def}
&=\sum_{m,n\in\mathbf{Z}} q^{d m^2+ b mn+d n^2+(d +b/2)m+(d+b/2)n+d/2+b/4}e^{2iu(m+n+1)} e^{2iv(m-n)}.
\end{align}
Let $b$ and $d$ be integers satisfying $-2d < b< 2d$ so that both double sums converge.
Then
\begin{align}
&A_{d , b}(u,v|4\tau)-C_{d , b}(u,v|4\tau)
=\vartheta_4(u|(2d+b)\tau)\vartheta_4(v|(2d-b)\tau).\label{c_gen}
\end{align}
Moreover,
\begin{align}\label{ACsqeqn}
&A_{d, b}^2(u,v|\tau)-C_{d,b}^2(u,v|\tau) = A_{d, b}^2(u,v+\pi/2|\tau)-C_{d,b}^2(u,v+\pi/2|\tau).
\end{align}
\end{theorem}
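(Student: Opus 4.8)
The plan is to establish the product formula \eqref{c_gen} by a single linear change of summation variables, and then to deduce the squaring identity \eqref{ACsqeqn} almost immediately by pairing \eqref{c_gen} with the companion formula for $A_{d,b}+C_{d,b}$.

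The structural observation I would record first is that $C_{d,b}$ is nothing but $A_{d,b}$ summed over the half-integer lattice. Completing the square in the exponent of \eqref{C2def} gives
\[
d m^2+bmn+dn^2+(d+\tfrac b2)(m+n)+\tfrac d2+\tfrac b4
 =d(m+\tfrac12)^2+b(m+\tfrac12)(n+\tfrac12)+d(n+\tfrac12)^2,
\]
and since $m+n+1=(m+\tfrac12)+(n+\tfrac12)$ while $m-n=(m+\tfrac12)-(n+\tfrac12)$, the series $C_{d,b}(u,v|\tau)$ coincides with the defining sum \eqref{A2def} of $A_{d,b}(u,v|\tau)$ with $\mathbf Z^2$ replaced by $(\mathbf Z+\tfrac12)^2$.

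Next I would set $s=m+n$ and $t=m-n$ and use the elementary identity $dm^2+bmn+dn^2=\tfrac14\big((2d+b)s^2+(2d-b)t^2\big)$, so that at the modulus $4\tau$ the common summand becomes $q^{(2d+b)s^2+(2d-b)t^2}e^{2ius}e^{2ivt}$. The map $(m,n)\mapsto(s,t)$ is a bijection of $\mathbf Z^2$ onto $\{s\equiv t\ (\mathrm{mod}\ 2)\}$, and of the half-integer lattice onto $\{s\not\equiv t\ (\mathrm{mod}\ 2)\}$. Thus $A_{d,b}(u,v|4\tau)$ is the sum of the summand over $s\equiv t$ and $C_{d,b}(u,v|4\tau)$ is its sum over $s\not\equiv t$. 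Subtracting inserts the sign $(-1)^{s+t}$ over all of $\mathbf Z^2$, whence the double sum factors as a product of two one-dimensional theta series and is recognized as $\vartheta_4(u|(2d+b)\tau)\,\vartheta_4(v|(2d-b)\tau)$, which is \eqref{c_gen}. (The hypothesis $-2d<b<2d$ forces $d>0$ and $2d\pm b>0$, which is exactly what makes the quadratic form positive definite and the two theta factors well defined.)

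Finally, adding the two series instead removes the sign and yields the companion $A_{d,b}(u,v|4\tau)+C_{d,b}(u,v|4\tau)=\vartheta_3(u|(2d+b)\tau)\,\vartheta_3(v|(2d-b)\tau)$. Multiplying the two relations gives
\[
A_{d,b}^2(u,v|4\tau)-C_{d,b}^2(u,v|4\tau)
 =\vartheta_3(u|(2d+b)\tau)\vartheta_4(u|(2d+b)\tau)\,\vartheta_3(v|(2d-b)\tau)\vartheta_4(v|(2d-b)\tau).
\]
Since $\vartheta_3(v+\pi/2|\cdot)=\vartheta_4(v|\cdot)$ and $\vartheta_4(v+\pi/2|\cdot)=\vartheta_3(v|\cdot)$, the shift $v\mapsto v+\pi/2$ merely interchanges the two $v$-factors and leaves the $u$-factors untouched, so the right-hand side is invariant; this is precisely \eqref{ACsqeqn} after renaming $4\tau$ as $\tau$, which is legitimate because $\tau$ ranges over the whole upper half-plane. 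I do not anticipate a genuine obstacle here: the only points demanding care are completing the square to see $C_{d,b}$ as the half-integer analogue of $A_{d,b}$ and correctly tracking the parity of $(s,t)$; once these are settled, both identities and the $\pi/2$-invariance fall out transparently.
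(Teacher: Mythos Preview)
Your proposal is correct and follows essentially the same route as the paper: both diagonalize the quadratic form via $s=m+n$, $t=m-n$ (equivalently the paper's $m=(k+\ell)/2$, $n=(k-\ell)/2$), factor $A_{d,b}\pm C_{d,b}$ into products of one-variable theta functions, multiply to obtain $A^2-C^2$, and invoke the invariance of the resulting product under $v\mapsto v+\pi/2$. The only cosmetic difference is that the paper first records separate closed forms for $A_{d,b}$ and $C_{d,b}$ as $\vartheta_3\vartheta_3+\vartheta_2\vartheta_2$ and $\vartheta_2\vartheta_3+\vartheta_3\vartheta_2$ before subtracting, whereas you go straight to $A\mp C$ via the parity sign $(-1)^{s+t}$; this is a minor streamlining, not a different argument.
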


\begin{proof}
Every integer pair $(m,n)\in \mathbf{Z}^2$ corresponds uniquely to a  pair $(k,\ell) \in\mathbf{Z}^2$ satisfying $k\equiv \ell \pmod{2}$ via the transformation
\[
m=\frac{k+\ell}{2} \quad \text{and} \quad  n=\frac{k-\ell}{2}.
\]
Therefore, we may express a double sum over $(m,n)\in\mathbf{Z}^2$ as
\begin{align*}
\sum_{m,n \in \mathbf{Z}}F(m,n) &= \sum_{\substack{k,\ell\in\mathbf{Z}\\ k\equiv \ell \pmod{2}}}F\left(\frac{k+\ell}{2}, \frac{k-\ell}{2}\right) \\
&= \sum_{k,\ell\in\mathbf{Z}}F\left(k+\ell, k-\ell\right)+\sum_{k,\ell\in\mathbf{Z}}F\left(k+\ell+1, k-\ell\right),
\end{align*}
by considering $k,\ell$ both even and $k,\ell$ both odd.

Applying this change of summation indices on $A_{d, b}^2(u,v|\tau)$ and
$C_{d, b}^2(u,v|\tau)$, respectively, we find that
\begin{align} \nonumber
A_{d, b}(u,v|\tau)
&= \vartheta_3(2u|(2d+b)\tau)\vartheta_3(2v|(2d-b)\tau)\nonumber\\
&\quad+\vartheta_2(2u|(2d+b)\tau)\vartheta_2(2v|(2d-b)\tau)\label{Ceqn100} \intertext{and}
C_{d,b}(u,v|\tau)
&= \vartheta_2(2u|(2d+b)\tau)\vartheta_3(2v|(2d-b)\tau)\notag \\ \label{Ceqn}
&\quad+\vartheta_3(2u|(2d+b)\tau)\vartheta_2(2v|(2d-b)\tau).
\end{align}
Taking the difference  between \eqref{Ceqn100} and \eqref{Ceqn}, we find that
\begin{align*}
A_{d, b}(u,v|4\tau) - C_{d, b}(u,v|4\tau)
&=\left[\vartheta_3(2u|4(2d+b)\tau)-\vartheta_2(2u|4(2d+b)\tau)\right]\\
&\quad\times
\left[\vartheta_3(2v|4(2d-b)\tau)
-\vartheta_2(2v|4(2d-b)\tau)\right]\\
&=\vartheta_4(u|(2d+b)\tau)\vartheta_4(v|(2d-b)\tau),
\end{align*}
since
\[
\vartheta_3(2z|4\tau)-\vartheta_2(2z|4\tau)= \vartheta_4(z|\tau),
\]
which follows directly from the definitions of the theta functions.
This proves \eqref{c_gen}.

Next, we prove \eqref{ACsqeqn}.   It is clear from the
definitions of $A_{d,b}(u,v)$ and $C_{d,b}(u,v)$, \eqref{A2def} and \eqref{C2def}, that
\begin{align} \label{AdbPi}
A_{d, b}(u+\pi/2,v+\pi/2|\tau) &= A_{d, b}(u,v|\tau)
\intertext{and} \label{CdbPi}
C_{d, b}(u+\pi/2,v+\pi/2|\tau) &= -C_{d, b}(u,v|\tau).
\end{align}
Replacing $u$ and $v$ by $u+\pi/2$ and $v+\pi/2$ \eqref{c_gen} respectively, followed by using \eqref{AdbPi} and \eqref{CdbPi}, we obtain
\[
A_{d, b}(u,v|4\tau) + C_{d, b} (u,v|4\tau)
=\vartheta_4(u+\pi/2|(2d+b)\tau)\vartheta_4(v+\pi/2|(2d-b)\tau).
\]
Multiplying this equation by \eqref{c_gen}, we then obtain
\begin{align}
A_{d, b}^2(u,v|4\tau) - C_{d, b}^2 (u,v|4\tau)
&= \vartheta_4(u|(2d+b)\tau)\vartheta_4(v|(2d-b)\tau)\notag\\
& \qquad \times \vartheta_4(u+\pi/2|(2d+b)\tau)\vartheta_4(v+\pi/2|(2d-b)\tau).
\label{Ceqn101}
\end{align}
Since the expression on the right hand side
of \eqref{Ceqn101}
is invariant when $v$ is replaced by $v+\pi/2$,  we obtain
$$A_{d, b}^2(u,v|4\tau) - C_{d, b}^2(u,v|4\tau)=
A_{d, b}^2(u,v+\pi/2|4\tau) - C_{d, b}^2(u,v+\pi/2|4\tau)$$
and the proof of Lemma \ref{AC2thm} is complete.
\end{proof}

\begin{corollary}[Generalization of \eqref{ChanChanSole-id}] \label{aba} Suppose $b$ and $d$ are integers satisfying
$-2d < b< 2d$. %equation \eqref{jacobi-abc} is true.
Then
\begin{align} \nonumber
&\left(\sum_{m,n\in\mathbf{Z}} (-1)^{n-m} q^{d(m+1/2)^2+b(m+1/2)(n+1/2)+d(n+1/2)^2} x^{m+1/2} y^{n+1/2}
\right)^2\\ \nonumber
&+
\left(\sum_{m,n\in\mathbf{Z}} q^{dm^2+b mn+dn^2} x^{m} y^{n}
\right)^2=
\left(\sum_{m,n\in\mathbf{Z}} (-1)^{n-m} q^{d m^2+b mn+d n^2} x^{m} y^{n}
\right)^2\\ & +
\left(\sum_{m,n\in\mathbf{Z}} q^{d(m+1/2)^2+b(m+1/2)(n+1/2)+d(n+1/2)^2} x^{m+1/2} y^{n+1/2}
\right)^2.\label{gen-Jacobi-xy}
\end{align}
\end{corollary}

\begin{proof}%[Proof of Theorem \ref{aba}]
Identity \eqref{gen-Jacobi-xy} follows by letting $$x=e^{4iu}\,\,\text{and}\,\, y=e^{2i(u+v)}$$ in \eqref{ACsqeqn}.
%Set $x=e^{2i(u+v)}$ and $y=e^{2i(u-v)}$ in \eqref{gen-Jacobi-xy}. With  these substitutions, \eqref{gen-Jacobi-xy} becomes equivalent to . Since \eqref{ACsqeqn} holds, the theorem follows  immediately.
\end{proof}

Note that Theorem~\ref{th-Jacobi} follows from Corollary \ref{aba} when $d=1$ and $b=0$.

\begin{theorem}\label{AC2thm2}
Let
\begin{align} \label{A2def2}
\widehat{A}_{\alpha , \beta}(u,v|\tau)&=\sum_{m,n\in\mathbf{Z}} q^{\alpha m^2+\alpha mn+\beta n^2}e^{2iu(2m+n)} e^{2ivn}
\intertext{and} \label{C2def2}
\widehat{C}_{\alpha , \beta}(u,v|\tau)
&=\sum_{m,n\in\mathbf{Z}} q^{\alpha m^2+ \alpha mn+\beta n^2+\alpha m+\alpha n/2+\alpha /4}
e^{2iu(2m+n+1)} e^{2ivn}.
\end{align}
Let $0<\alpha<4\beta$ so that both double sums converge.
%We require  $-2\alpha < \beta< 2\alpha$ so that both double sums converge.
Then
\begin{align}
&\widehat{A}_{\alpha , \beta}(u,v|4\tau)-\widehat{C}_{\alpha , \beta}(u,v|4\tau)
=\vartheta_4(u|\alpha\tau)\vartheta_4(v|(4\beta-\alpha)\tau).\label{c_gen2}
\end{align}
Moreover,
\begin{align}\label{ACsqeqn2}
&\widehat{A}_{\alpha, \beta}^2(u,v|\tau)-\widehat{C}_{\alpha,\beta}^2(u,v|\tau) = \widehat{A}_{\alpha, \beta}^2(u,v+\pi/2|\tau)-\widehat{C}_{\alpha,\beta}^2(u,v+\pi/2|\tau).
\end{align}
\end{theorem}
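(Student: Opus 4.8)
The plan is to deduce both \eqref{c_gen2} and \eqref{ACsqeqn2} from a single source: a factorisation of each of $\widehat A_{\alpha,\beta}\pm\widehat C_{\alpha,\beta}$ into a product of two one-variable theta series, obtained by the lattice re-coordinatisation used in the second proof of \eqref{key-to-Schultz}. First I would complete the square in the exponent of $\widehat C_{\alpha,\beta}$: since
$$\alpha m^2+\alpha mn+\beta n^2+\alpha m+\tfrac{\alpha}{2}n+\tfrac{\alpha}{4}=\alpha\bigl(m+\tfrac12\bigr)^2+\alpha\bigl(m+\tfrac12\bigr)n+\beta n^2$$
and $2m+n+1=2\bigl(m+\tfrac12\bigr)+n$, both $\widehat A_{\alpha,\beta}(u,v|\tau)$ and $\widehat C_{\alpha,\beta}(u,v|\tau)$ are restrictions of the single summand $q^{\alpha M^2+\alpha Mn+\beta n^2}e^{2iu(2M+n)}e^{2ivn}$, summed over $M\in\mathbf Z$ for $\widehat A$ and over $M\in\mathbf Z+\tfrac12$ for $\widehat C$. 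Hence $\widehat A+\widehat C$ is the sum of this summand over all $M\in\tfrac12\mathbf Z$, while $\widehat A-\widehat C$ is the same sum weighted by $(-1)^{2M}$.

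Next I would set $k=2M\in\mathbf Z$ and diagonalise via $r=k+n$, $s=n$, the one algebraic fact required being
$$\tfrac{\alpha}{4}k^2+\tfrac{\alpha}{2}kn+\beta n^2=\tfrac{\alpha}{4}(k+n)^2+\bigl(\beta-\tfrac{\alpha}{4}\bigr)n^2.$$
The resulting double sums then factor, giving
$$\widehat A_{\alpha,\beta}(u,v|\tau)+\widehat C_{\alpha,\beta}(u,v|\tau)=\vartheta_3\Bigl(u\Big|\tfrac{\alpha}{4}\tau\Bigr)\vartheta_3\Bigl(v\Big|\bigl(\beta-\tfrac{\alpha}{4}\bigr)\tau\Bigr)$$
and
$$\widehat A_{\alpha,\beta}(u,v|\tau)-\widehat C_{\alpha,\beta}(u,v|\tau)=\vartheta_4\Bigl(u\Big|\tfrac{\alpha}{4}\tau\Bigr)\vartheta_4\Bigl(v\Big|\bigl(\beta-\tfrac{\alpha}{4}\bigr)\tau\Bigr),$$
where the weight $(-1)^{2M}=(-1)^k=(-1)^{r+s}$ is exactly what converts the two $\vartheta_3$'s into $\vartheta_4$'s. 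The hypothesis $0<\alpha<4\beta$ renders both quadratic forms $\tfrac{\alpha}{4}$ and $\beta-\tfrac{\alpha}{4}$ positive, so both series converge; replacing $\tau$ by $4\tau$ in the second identity is precisely \eqref{c_gen2}.

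Finally, multiplying the two factorisations gives
$$\widehat A_{\alpha,\beta}^2-\widehat C_{\alpha,\beta}^2=\vartheta_3\Bigl(u\Big|\tfrac{\alpha}{4}\tau\Bigr)\vartheta_4\Bigl(u\Big|\tfrac{\alpha}{4}\tau\Bigr)\,\vartheta_3\Bigl(v\Big|\bigl(\beta-\tfrac{\alpha}{4}\bigr)\tau\Bigr)\vartheta_4\Bigl(v\Big|\bigl(\beta-\tfrac{\alpha}{4}\bigr)\tau\Bigr),$$
and to obtain \eqref{ACsqeqn2} I would send $v\mapsto v+\pi/2$: the $u$-factors are untouched, while $\vartheta_3(v+\pi/2|\cdot)=\vartheta_4(v|\cdot)$ and $\vartheta_4(v+\pi/2|\cdot)=\vartheta_3(v|\cdot)$ merely interchange the two $v$-factors, leaving the product invariant. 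I do not anticipate a genuine obstacle: once the lattice is recoordinatised the factorisations are forced, and the only points to watch are the half-integer bookkeeping in $\widehat C_{\alpha,\beta}$ and keeping the two moduli $\tfrac{\alpha}{4}\tau$ and $(\beta-\tfrac{\alpha}{4})\tau$ distinct. The single new ingredient beyond \eqref{c_gen2} is the companion $\vartheta_3\vartheta_3$ factorisation of $\widehat A+\widehat C$, which together with the $\pi/2$-shift swap $\vartheta_3\leftrightarrow\vartheta_4$ makes \eqref{ACsqeqn2} immediate.
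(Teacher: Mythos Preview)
Your argument is correct. The completion-of-square recognition of $\widehat C_{\alpha,\beta}$ as the $M\in\mathbf Z+\tfrac12$ portion of the same summand is right, the diagonalisation $\frac{\alpha}{4}k^2+\frac{\alpha}{2}kn+\beta n^2=\frac{\alpha}{4}(k+n)^2+(\beta-\frac{\alpha}{4})n^2$ is exactly what is needed, and the $\vartheta_3\leftrightarrow\vartheta_4$ swap under $v\mapsto v+\pi/2$ finishes \eqref{ACsqeqn2} cleanly.

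The paper takes a slightly different route: rather than attack $\widehat A_{\alpha,\beta}$ and $\widehat C_{\alpha,\beta}$ directly, it first observes (via the substitution $(m,n)\mapsto(\ell,k-\ell)$) that $\widehat A_{\alpha,\beta}=A_{\beta,\alpha-2\beta}$ and $\widehat C_{\alpha,\beta}=C_{\beta,\alpha-2\beta}$, and then simply invokes the already-proved Theorem~\ref{AC2thm}. In that earlier proof the paper decomposes $A_{d,b}$ and $C_{d,b}$ \emph{separately} (via the parity split $m=(k+\ell)/2$, $n=(k-\ell)/2$) into sums of two $\vartheta_2,\vartheta_3$ products, subtracts, and uses $\vartheta_3(2z|4\tau)-\vartheta_2(2z|4\tau)=\vartheta_4(z|\tau)$; the companion factorisation of $A+C$ is then obtained from the symmetry $A(u+\pi/2,v+\pi/2)=A$, $C(u+\pi/2,v+\pi/2)=-C$ rather than computed directly. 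Your approach short-circuits all of this by treating $\widehat A\pm\widehat C$ as a single sum over the refined lattice $2M\in\mathbf Z$ and diagonalising once, which yields both $\vartheta_3\vartheta_3$ and $\vartheta_4\vartheta_4$ factorisations simultaneously. The underlying mechanism---reducing the binary form to a diagonal one so the sum factors---is the same in both, but your execution is more self-contained for this theorem, while the paper's has the virtue of deriving Theorems~\ref{AC2thm} and~\ref{AC2thm2} from a single computation.
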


\begin{proof}
Instead of proving \eqref{c_gen2} directly, thereby replicating similar proofs to that of \eqref{c_gen}, we deduce \eqref{c_gen2} from \eqref{c_gen}.
By replacing the summation variables $(m,n)$ by $(\ell, k-\ell)$, we find that
\begin{align*}
\widehat{A}_{\alpha , \beta}(u,v|\tau) &= A_{\beta , \alpha-2\beta}(u,v|\tau)
\intertext{and}
\widehat{C}_{\alpha , \beta}(u,v|\tau) &= C_{\beta , \alpha-2\beta}(u,v|\tau).
\end{align*}
Therefore, by applying \eqref{c_gen},
\begin{align*}
&\widehat{A}_{\alpha , \beta}(u,v|4\tau)-\widehat{C}_{\alpha , \beta}(u,v|4\tau)\\
&=A_{\beta , \alpha-2\beta}(u,v|4\tau)- C_{\beta , \alpha-2\beta}(u,v|4\tau)\\
&=\vartheta_4(u|\alpha\tau)\vartheta_4(v|(4\beta-\alpha)\tau),
\end{align*}
which is \eqref{c_gen2}. Identity \eqref{ACsqeqn2} also follows easily from \eqref{ACsqeqn} by noting that
\begin{align*}
\widehat{A}_{\alpha, \beta}^2(u,v|\tau)-\widehat{C}_{\alpha,\beta}^2(u,v|\tau)
&= A_{\beta, \alpha-2\beta}^2(u,v|\tau)-C_{\beta,\alpha-2\beta}^2(u,v|\tau) \\
&= A_{\beta, \alpha-2\beta}^2(u,v+\pi/2|\tau)-C_{\beta,\alpha-2\beta}^2(u,v+\pi/2|\tau)\\
&=\widehat{A}_{\alpha, \beta}^2(u,v+\pi/2|\tau)-\widehat{C}_{\alpha,\beta}^2(u,v+\pi/2|\tau).
\qedhere
\end{align*}
\end{proof}

\begin{corollary}[Generalization of \eqref{CCS-gen}] \label{aab-1}
Suppose $\alpha$ and $\beta$ are integers satisfying $0<\alpha<4\beta$. Then
\begin{align} \nonumber
&\left(\sum_{m,n\in\mathbf{Z}} (-1)^{n} q^{\alpha (m+1/2)^2+\alpha (m+1/2)n+\beta n^2} x^{m+1/2} y^{n}
\right)^2 \\ \nonumber
&+
\left(\sum_{m,n\in\mathbf{Z}} q^{\alpha m^2+\alpha mn+\beta n^2} x^{m} y^{n}
\right)^2
=
\left(\sum_{m,n\in\mathbf{Z}} (-1)^{n} q^{\alpha m^2+\alpha mn+\beta n^2} x^{m} y^{n}
\right)^2\\ %\nonumber
&\qquad +
\left(\sum_{m,n\in\mathbf{Z}} q^{\alpha (m+1/2)^2+\alpha (m+1/2)n+\beta n^2} x^{m+1/2} y^{n}
\right)^2.\label{jacobi-abc2}
\end{align}
\end{corollary}

\begin{proof}%[Proof of  Corollary \ref{aab-1}]
Identity \eqref{jacobi-abc2} follows by letting $$x=e^{4iu}\,\,\text{and}\,\, y=e^{2i(u+v)}$$ in \eqref{ACsqeqn2}.
\end{proof}

\begin{remark}
The identity \eqref{ACsqeqn} is a two-variable generalization of \eqref{ChanChanSole-id} and can be shown as follow:
From \eqref{Ceqn}, we find that
\begin{equation} C_{d,b}(0,0|\tau)-C_{d,b}(0,\pi/2|\tau)
= 2\vartheta_3(0|(2d+b)\tau)\vartheta_2(0|(2d-b)\tau)\label{C2minus}
\end{equation}
and
\begin{equation}\label{C2plus}
C_{d,b}(0,0|\tau)+C_{d,b}(0,\pi/2|\tau)
= 2\vartheta_2(0|(2d+b)\tau)\vartheta_3(0|(2d-b)\tau).
\end{equation}
Multiplying \eqref{C2minus} by \eqref{C2plus}, we arrive at
\begin{align*}
&C_{d,b}^2(0,0|\tau)-C_{d,b}^2(0,\pi/2|\tau)\\
&=4\vartheta_2(0|(2d+b)\tau)\vartheta_3(0|(2d+b)\tau)
\vartheta_2(0|(2d-b)\tau)\vartheta_3(0|(2d-b)\tau)\\
&=\vartheta_2^2(0|(d+b/2)\vartheta_2^2(0|(d-b/2)\tau),
\end{align*}
where we have used  \cite[p.58]{Chan-book}
\begin{equation} \label{t2322}
2\vartheta_2(0|2\tau)\vartheta_{3}(0|2\tau)=\vartheta_2^2(0|\tau).
\end{equation}

Applying the change of summation indices $m=(k+\ell)/2$ and $n=(k-\ell)/2$,  as in the proof of  Theorem \ref{AC2thm}, we find that
\begin{align*}
&2\sum_{m,n\in\mathbf{Z}} q^{2(d(m+1/2)^2+b(m+1/2)n+d n^2)}\\
&=2\sum_{k, \ell \in\mathbf{Z}} q^{2(2d+b)(k+1/4)^2+(2d-b)(\ell+1/4)^2}
+2\sum_{k, \ell \in\mathbf{Z}} q^{2(2d+b)(k+3/4)^2+(2d-b)(\ell+3/4)^2}\\
&=4\sum_{k, \ell \in\mathbf{Z}} q^{2(2d+b)(k+1/4)^2+(2d-b)(\ell+1/4)^2}\\
&=\vartheta_2(0|(d+b/2)\vartheta_2(0|(d-b/2)\tau),
\end{align*}
where the last equality follows from the following computations:
\begin{align*}
&2\sum_{k\in\mathbf{Z}} q^{4(k+1/4)^2}
= 2\sum_{k\in\mathbf{Z}} q^{4(k-1/4)^2}
= 2\sum_{k\in\mathbf{Z}} q^{4(k+3/4)^2}\\
&=\sum_{k\in\mathbf{Z}} q^{(4k+1)^2/4} + \sum_{k\in\mathbf{Z}} q^{(4k+3)^2/4}
=\sum_{k\in\mathbf{Z}} q^{(2k+1)^2/4} = \vartheta_2(0|\tau).
\end{align*}
Therefore,
\begin{align*}
&C_{d,b}^2(0,0|\tau)-C_{d,b}^2(0,\pi/2|\tau)
=\left(2\sum_{m,n\in\mathbf{Z}} q^{2(d(m+1/2)^2+b(m+1/2)n+d n^2)} \right)^2,
\end{align*}
and by setting $u=v=0$ in \eqref{ACsqeqn}, we recover \eqref{ChanChanSole-id}.
\end{remark}

\begin{remark}
Identity \eqref{ACsqeqn2} is a two variable generalization of \cite[(1.6)]{Chan-Chan-Sole} and we present the proof as follow:
Comparing \eqref{ACsqeqn2} and \cite[(1.6)]{Chan-Chan-Sole}, we see that it suffices to show
\begin{align}
&\widehat{A}_{\alpha, \beta}^2(0,\pi/2|2\tau) - \widehat{C}_{\alpha, \beta}^2(0,\pi/2|2\tau)= \left(\sum_{m,n\in\mathbf{Z}} (-1)^{m-n}q^{\alpha m^2+ \alpha mn +\beta n^2}\right)^2.
\label{AC=B}
\end{align}

From \eqref{Ceqn101}, we see that
\begin{align*}
&\widehat{A}_{\alpha, \beta}^2(0,\pi/2|4\tau) - \widehat{C}_{\alpha, \beta}^2(0,\pi/2|4\tau)\\
&= \vartheta_4(0|\alpha\tau)\vartheta_4(0|(4\beta-\alpha)\tau)
 \vartheta_4(\pi/2|\alpha\tau)\vartheta_4(\pi/2|(4\beta-\alpha)\tau)\\
 &= \vartheta_4(0|\alpha\tau)\vartheta_4(0|(4\beta-\alpha)\tau)
 \vartheta_3(0|\alpha\tau)\vartheta_3(0|(4\beta-\alpha)\tau)\\
 &= \vartheta_4^2(0|2\alpha\tau)\vartheta_4^2(0|2(4\beta-\alpha)\tau)
\end{align*}
by applying the   identity \cite[p.34]{BB}
\begin{equation} \label{t3444}
\vartheta_3(0|\tau)\vartheta_4(0|\tau) = \vartheta_4^2(0|2\tau).
\end{equation}

Since it is shown in \cite[(3.2)]{Chan-Chan-Sole} that
\begin{equation*} \nonumber
 \sum_{m,n\in\mathbf{Z}} (-1)^{m-n}q^{\alpha m^2+ \alpha mn +\beta n^2}
 = \vartheta_4(0|\alpha\tau)\vartheta_4(0|(4\beta-\alpha)\tau),
\end{equation*}
equation \eqref{AC=B} then follows immediately.
\end{remark}

\begin{remark}
Both \eqref{ChanChanSole-id} and \eqref{CCS-gen} are generalizations of \eqref{Jacobi}. This can be seen as follows.

By setting $b=1$ and $d=0$ in \eqref{ChanChanSole-id}, the equivalence between \eqref{ChanChanSole-id} and
\eqref{Jacobi} is established once we show that
\begin{equation}\label{t2322a}
2\sum_{m,n=-\infty}^{\infty} q^{2(m+1/2)^2+2n^2} =
\sum_{m,n=-\infty}^{\infty} q^{(m+1/2)^2+(n+1/2)^2},
\end{equation}
but this is precisely identity \eqref{t2322}.

Next, to show \eqref{CCS-gen} implies \eqref{Jacobi}, we set $b=1$ and $d=1/2$ in \eqref{CCS-gen}, and then replace the summation indices $(m,n)$  by $(k,\ell-k)$. Doing so, we arrive at
\begin{align*}&\left(\sum_{m,n=-\infty}^\infty (-1)^{n} q^{(m^2+n^2)/2}\right)^2
+\left(\sum_{m,n=-\infty}^\infty q^{(m+1/2)^2+(n+1/2)^2}\right)^2\notag\\
&\qquad\qquad\qquad\qquad=\left(\sum_{m,n=-\infty}^\infty q^{m^2+n^2}\right)^2.
\end{align*}
Thus, it suffices to show that
\begin{equation}\label{t3444a}
\sum_{m,n=-\infty}^\infty (-1)^{n} q^{m^2+n^2}
= \sum_{m,n=-\infty}^\infty (-1)^{m+n} q^{2(m^2+n^2)},
\end{equation}
which again is precisely identity \eqref{t3444}.
\end{remark}

\section{Cubic Borweins-type identities} \label{new-borweins}
\label{s6}

It is natural to ask if there are cubic analogues of \eqref{ChanChanSole-id} and \eqref{CCS-gen}.
This question leads us to the following new identities of Borweins type.

\begin{theorem} \label{SH-cubic}
Let $|q|<1$ and $\omega=e^{2\pi i /3}$. Then
\begin{align}
\left(\sum_{m,n\in \mathbf Z} \omega^{m-n}q^{m^2+mn+n^2}\right)^3
&+\left(3\sum_{m,n\in\mathbf{Z}} q^{3((m+1/3)^2+(m+1/3)n+n^2)}\right)^3\notag\\
 &\qquad\qquad= \left(\sum_{m,n\in \mathbf Z} q^{m^2+mn+n^2}\right)^3 \label{ChanChanSole-(1,1)}
 \intertext{and}
\left(\sum_{m,n\in \mathbf Z} \omega^{m-n}q^{3m^2+3mn+n^2}\right)^3
&+\left(\sum_{m,n\in\mathbf{Z}} q^{3(3(m+1/3)^2+3(m+1/3)n+n^2)}\right)^3\notag\\
 &\qquad\qquad= \left(\sum_{m,n\in \mathbf Z} q^{3(3m^2+3mn+n^2)}\right)^3. \label{CCS-gen-(3,1)}
\end{align}
 \end{theorem}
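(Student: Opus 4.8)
The plan is to recognise both \eqref{ChanChanSole-(1,1)} and \eqref{CCS-gen-(3,1)} as Borweins' identity \eqref{Borweins} in disguise. Throughout write $Q(m,n)=m^2+mn+n^2$ and abbreviate the three Borwein series as $a(q)=\sum_{m,n}q^{Q(m,n)}$, $b(q)=\sum_{m,n}\omega^{m-n}q^{Q(m,n)}$ and $c(q)=\sum_{m,n}q^{Q(m+1/3,n+1/3)}$, so that \eqref{Borweins} reads $b(q)^3+c(q)^3=a(q)^3$ and likewise with $q$ replaced by $q^3$. In \eqref{ChanChanSole-(1,1)} the first and last bases are already $b(q)$ and $a(q)$. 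In \eqref{CCS-gen-(3,1)} I would first apply the unimodular substitution $n\mapsto m+n$, which sends $3m^2+3mn+n^2$ to $Q(m,n)$ because $Q(m,m+n)=3m^2+3mn+n^2$; under it the last base becomes $\sum q^{3Q(m,n)}=a(q^3)$, the middle base becomes $\sum q^{3Q(m+1/3,n+1/3)}=c(q^3)$ (using $3(m+1/3)^2+3(m+1/3)n+n^2=Q(m+1/3,m+n+1/3)$), and the first base becomes $\sum_{m,n}\omega^{2m-n}q^{Q(m,n)}$. After these reductions, \eqref{ChanChanSole-(1,1)} is equivalent to Borweins' identity at $q$ together with
\[
3\sum_{m,n\in\mathbf Z}q^{3Q(m+1/3,n)}=c(q),
\]
while \eqref{CCS-gen-(3,1)} is equivalent to Borweins' identity at $q^3$ together with
\[
\sum_{m,n\in\mathbf Z}\omega^{2m-n}q^{Q(m,n)}=b(q^3)=\sum_{m,n\in\mathbf Z}\omega^{m-n}q^{3Q(m,n)}.
\]

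I expect these two theta evaluations to be the main obstacle: unlike the lattice relabellings above, neither is a mere change of variable. The first is a genuine cubic transformation whose factor $3$ is the index of the sublattice $\sqrt{-3}\,\mathbf Z[\omega]$ in $\mathbf Z[\omega]$; concretely the linear map $\sigma(m,n)=(-m-2n,2m+n)$ satisfies $Q(\sigma v)=3Q(v)$ and has determinant $3$, so $\sum_{m,n}q^{3Q(m+1/3,n)}$ is a theta series over the index-$3$ sublattice $\sigma(\mathbf Z^2)$ and must be matched against the full-lattice series $c(q)$. In the second evaluation, the vanishing of all parts whose $q$-exponent is not a multiple of $3$ (which is what makes the left-hand side a function of $q^3$) is an instance of the symmetry argument already used for \eqref{Borweins-0}.

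To establish the two displayed evaluations I would mirror the one-dimensional strategy of Section~\ref{s5}. Diagonalising $Q(m,n)=\tfrac34(m+n)^2+\tfrac14(m-n)^2$ and splitting each double sum according to the common parity of $m+n$ and $m-n$, exactly the manoeuvre behind \eqref{A-theta}, expresses every series above as a sum of two products of classical one-variable theta functions with rational characteristics. Converting these products to $\eta$-quotients by Jacobi's triple product \eqref{JTP-T1}, and simplifying with the classical relations \eqref{t2322} and \eqref{t3444} together with the product evaluation $b(q)=\eta^3(\tau)/\eta(3\tau)$ recorded in \eqref{b(q)-prod} and the companion product formula for $c$ obtained in Section~\ref{s3}, should reduce each evaluation to an identity between $\eta$-quotients that is then checked directly; here the two displayed identities play precisely the cubic-base role that \eqref{t2322} and \eqref{t3444} play in the quadratic setting. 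A second, more conceptual route would bypass the explicit $\eta$-arithmetic by using the modular transformation formulas \eqref{trans-A}, \eqref{trans-C} and \eqref{imeqn1} of Theorem~\ref{thm1}: these relate the plain shifted-coset series to the character-twisted series $A^\dagger,C^\dagger$, so that applying them to the $c$-type evaluation should produce the $b$-type evaluation, whence proving either one yields the other and, with Borweins' identity, both of \eqref{ChanChanSole-(1,1)} and \eqref{CCS-gen-(3,1)}.
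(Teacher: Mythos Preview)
Your reduction is exactly the paper's: both \eqref{ChanChanSole-(1,1)} and \eqref{CCS-gen-(3,1)} are Borweins' identity at $q$ (resp.\ $q^3$) once the two auxiliary evaluations
\[
3\sum_{m,n\in\mathbf Z}q^{3Q(m+1/3,n)}=c(q)
\qquad\text{and}\qquad
\sum_{m,n\in\mathbf Z}\omega^{m+n}q^{Q(m,n)}=b(q^3)
\]
are in hand (your $\sum\omega^{2m-n}q^{Q(m,n)}$ equals the second left-hand side since $\omega^{2m}=\omega^{-m}$ and $Q$ is even). The paper records these as \eqref{conj2-1} and \eqref{conj1} and deduces the theorem from them precisely along the lines you describe, including the unimodular substitution $(m,n)\mapsto(m,m+n)$ for the $3m^2+3mn+n^2$ form.

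Where you diverge is in how to prove those two evaluations. You propose diagonalising $Q$, splitting by parity, passing to products of one-variable theta series and then to $\eta$-quotients, or alternatively invoking the modular transformations \eqref{trans-A}--\eqref{trans-C}. The paper instead proves both by short, purely combinatorial lattice-coset arguments: for \eqref{conj1} it decomposes $\mathbf Z[\gamma]$ into three cosets of an index-$3$ sublattice and shows that two of the three partial sums vanish by the very same symmetry trick used for \eqref{Borweins-0}; for \eqref{conj2} it splits the full lattice according to the residue class of $(m,n)$ modulo $3$ and matches the pieces bijectively. Your analytic route is viable but heavier --- it leans on the $\eta$-product formulas for $b$ and $c$ established elsewhere in the paper, and the parity splitting is awkward with $1/3$-shifts --- whereas the paper's arguments are elementary and self-contained, reusing nothing beyond \eqref{Borweins-0}.
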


We note that \eqref{ChanChanSole-(1,1)} is the cubic analogue of \eqref{ChanChanSole-id} in the case $b=d=1$ and
\eqref{CCS-gen-(3,1)} is the cubic analogue of \eqref{CCS-gen} in the case $b=3$ and $d=1$. These identities are
different ways of expressing \eqref{Borweins} but establishing their connections with \eqref{Borweins} turns out to be very interesting on its own.

We first require three identities.

\begin{lemma} Let $|q|<1$ and $\omega=e^{2\pi i/3}$. Then
\begin{align}
&\sum_{m,n\in\mathbf Z} \omega^{m-n} q^{m^2+mn+n^2+m+n} = 0,\label{B-0}\\
&\sum_{m,n\in\mathbf{Z}} \omega^{m+n}q^{m^2+mn+n^2}
=\sum_{m,n\in\mathbf{Z}} \omega^{m-n}q^{3(m^2+mn+n^2)} \label{conj1}
\intertext{and} \nonumber
&\sum_{m,n\in\mathbf{Z}} q^{(m+1/3)^2+(m+1/3)(n+1/3)+(n+1/3)^2}\\
&=3\sum_{m,n\in\mathbf{Z}} q^{3((m+1/3)^2-(m+1/3)(n+1/3)+(n+1/3)^2)}. \label{conj2}
\end{align}
\end{lemma}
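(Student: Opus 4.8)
The plan is to handle the three identities separately. Equation \eqref{B-0} is essentially already proved in the Introduction, whereas \eqref{conj1} and \eqref{conj2} are genuine cubic transformation identities that I would establish by a uniform recipe: rescale the ``level $3$'' side using multiplication by an element of norm $3$ in $\mathbf Z[\omega]$, and then exploit the symmetries of the hexagonal form $m^2+mn+n^2$. For \eqref{B-0} itself, this is literally the series $S$ shown to vanish in the discussion following \eqref{Borweins-0}: under the bijection $(m,n)\mapsto(n,-m-n-1)$ of $\mathbf Z^2$ the exponent $m^2+mn+n^2+m+n$ is invariant while $\omega^{m-n}$ acquires a factor $\omega$, so the sum equals $\omega$ times itself and must be $0$. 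I would simply repeat that one line.

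For \eqref{conj1}, write $Q(m,n)=m^2+mn+n^2$; the right-hand side is $\sum_{m,n}\omega^{m-n}q^{3Q(m,n)}$. The key algebraic input is the norm-$3$ scaling $3Q(m,n)=Q(m-n,\,m+2n)$, so that the substitution $(M,N)=(m-n,\,m+2n)$ is a bijection of $\mathbf Z^2$ onto the index-$3$ sublattice $\{M\equiv N\ (\mathrm{mod}\ 3)\}$ with $m-n=M$, turning the right-hand side into $\sum_{M\equiv N\,(3)}\omega^{M}q^{Q(M,N)}$. I would then remove the congruence with the cube-root-of-unity detector $\mathbf 1[M\equiv N]=\tfrac13\sum_{j=0}^{2}\omega^{j(M-N)}$, producing $\tfrac13\sum_{j}\sum_{M,N}\omega^{(j+1)M-jN}q^{Q(M,N)}$. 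Finally I would observe that each of the three inner sums equals the left-hand side $\sum_{m,n}\omega^{m+n}q^{Q(m,n)}$: the linear form $(j+1)M-jN$ is congruent modulo $3$ to $M$, $-(M+N)$, $N$ for $j=0,1,2$, and each of $M$, $M+N$, $N$ is carried to $m+n$ by a unimodular substitution preserving $Q$ (for instance $\sum\omega^{m+n}q^{Q}=\sum\omega^{n}q^{Q}$ via $(m,n)\mapsto(-m,m+n)$). Hence the three sums coincide, the factor $\tfrac13$ cancels, and \eqref{conj1} follows.

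For \eqref{conj2}, set $Q_-(x,y)=x^2-xy+y^2$ and note $Q_+(x,y)=x^2+xy+y^2=Q_-(x,-y)$. Writing $c(q)$ for the left-hand side, the identity $Q_+=Q_-(x,-y)$ presents it as the coset theta $c(q)=\sum_{w\in(1/3,2/3)+\mathbf Z^2}q^{Q_-(w)}$, while after the $Q_-$-norm-$3$ scaling $3Q_-(x,y)=Q_-(x+y,\,2y-x)$ the right-hand side becomes $3\sum_{w\in(2/3,1/3)+\Lambda_0}q^{Q_-(w)}$, where $\Lambda_0=\{M+N\equiv0\ (\mathrm{mod}\ 3)\}$ is the norm-$3$ sublattice. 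The plan is to split $(1/3,2/3)+\mathbf Z^2$ into its three $\Lambda_0$-cosets and to use the order-$3$ rotational automorphism $\rho(x,y)=(-y,x-y)$ of $Q_-$ (multiplication by $\omega$ on $\mathbf Z[\omega]$, which preserves $\Lambda_0$ and cyclically permutes the three cosets without a fixed point) to conclude that the three corresponding thetas are \emph{equal}; the reflection $(x,y)\mapsto(y,x)$ then identifies one of them with the theta over $(2/3,1/3)+\Lambda_0$. This produces exactly the factor $3$ and yields \eqref{conj2}.

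I expect the main obstacle to be the coset bookkeeping in \eqref{conj2}: tracking the half-integer shifts through the norm-$3$ substitutions and checking that $\rho$ really permutes the three $\Lambda_0$-cosets fixed-point-freely (which is what delivers the clean factor $3$). By contrast, once the norm-$3$ scalings $3Q_+=Q_+(m-n,m+2n)$ and $3Q_-=Q_-(x+y,2y-x)$ are in hand, \eqref{B-0} and \eqref{conj1} are comparatively routine.
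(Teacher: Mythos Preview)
Your proposal is correct and rests on the same underlying mechanism as the paper's proof: a norm-$3$ change of variables in $\mathbf Z[\omega]$ that identifies each ``level $3$'' theta with a sublattice (or coset) sum inside the full hexagonal lattice, followed by the symmetries of $m^2+mn+n^2$ to collapse the coset decomposition. For \eqref{B-0} and \eqref{conj2} the two arguments are essentially identical: the paper verifies the norm-$3$ identity $Q(-m-2n-1,2m+n+1)=3Q(m,n)+3m+3n+1$ directly, splits the resulting congruence class $m\equiv n+1\pmod 3$ into $C_{1,0}+C_{0,-1}+C_{-1,1}$, and shows these coincide via explicit unimodular substitutions, whereas you package the same three-term equality as the action of the order-$3$ rotation $\rho$ on the shifted lattice $(1/3,2/3)+\mathbf Z^2$ (and your bookkeeping is right: $\rho$ moves the coset index $j\mapsto j+1$).

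The one genuine difference is in \eqref{conj1}. The paper decomposes the left-hand side as $T_0+T_1+T_2$ along cosets of the norm-$3$ sublattice and then invokes the vanishing \eqref{B-0} to kill $T_1$ and $T_2$, leaving $T_0$ equal to the right-hand side. You instead embed the right-hand side into the full lattice via $3Q(m,n)=Q(m-n,m+2n)$, remove the congruence with a root-of-unity detector, and use the $6$-fold symmetry of $Q$ to show the three resulting unrestricted sums are all equal to $\sum\omega^{m+n}q^{Q}$. Your route avoids appealing to \eqref{B-0} altogether and is arguably more transparent; the paper's route has the virtue of making the role of \eqref{B-0} explicit (which matters elsewhere in the section). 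Both are short and elementary.
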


\begin{proof}
Identity \eqref{B-0} is \eqref{Borweins-0} and the proof is presented in Section~\ref{s1}.
Let
\begin{gather*}
N(m+n\gamma) = \left|m+n\gamma\right|^2= m^2+mn+n^2,\\
R(m+n\gamma)= m \quad\text{and}\quad
I(m+n\gamma)= n,
\end{gather*}
where $\gamma=(1+\sqrt{-3})/2.$
From
$$\mathcal L=\bigcup_{j=0}^2 \mathcal L_j$$
where $$\mathcal L= \{m+n\gamma\mid m,n\in\mathbf Z\}\quad\text{and}\quad \mathcal L_j = \{-2n-m+j+(2m+n)\gamma\mid m,n\in\mathbf Z\},$$
we deduce that
$$
\sum_{\ell\in \mathcal L} \omega^{R(\ell)+I(\ell)} q^{N(\ell)}
=\sum_{j=0}^2 T_j$$
where $$T_j =\sum_{\ell\in \mathcal L_j} \omega^{R(\ell)+I(\ell)} q^{N(\ell)}.$$
Note that
\begin{align*}
T_0 &= \sum_{m,n\in \mathbf Z}\omega^{m-n} q^{3m^2+3mn+3n^2},\\
T_1 &= \sum_{m,n\in \mathbf Z}\omega^{m-n+1} q^{3m^2+3mn+3n^2-3n+1}\intertext{and}
T_2 &= \sum_{m,n\in \mathbf Z}\omega^{m-n-1} q^{3m^2+3mn+3n^2+3n+1}.
\end{align*}
Replacing $n$ by $n+1$ in $T_1$,
we deduce that
$$T_1=\sum_{m,n\in \mathbf Z}\omega^{m-n} q^{3m^2+3mn+3n^2+3m+3n+1}=0,$$
by \eqref{B-0}.
Similarly, by replacing $n$ by $n-1$ in $T_2$, followed by $(m,n)$ by $(-n,-m)$, and using \eqref{B-0}, we conclude that $T_2=0$ and the proof of \eqref{conj1} is complete.

To prove \eqref{conj2}, we observe that by direct verification that
$$\sum_{m,n\in \mathbf Z} q^{3m^2+3n^2+3mn+3m+3n+1} = \sum_{\substack{m,n\in \mathbf Z\\ m\equiv n+1 \pmod{3}}} q^{m^2+mn+n^2}.$$
The solutions to $m\equiv n+1\pmod{3}$ are $(m,n)\equiv (1,0), (-1,1) $ and $(0,-1) \pmod{3}$. Therefore,
$$\sum_{\substack{m,n\in \mathbf Z\\ m\equiv n+1\pmod{3}}} q^{m^2+mn+n^2}=
C_{1,0}+C_{0,-1}+C_{-1,1},$$
where
\begin{equation*}
C_{i,j} = \sum_{\substack{m,n\in \mathbf Z\\ (m,n)\equiv (i,j) \pmod{3}}} q^{m^2+mn+n^2}.
\end{equation*}
We will show that $C_{1,0}=C_{0,-1}=C_{-1,1}$.
Note that
$$C_{1,0} = C_{-1,0}=C_{0,-1}.$$

To prove that $C_{-1,1}=C_{1,0}$, we observe that
\begin{equation}\label{C(-1,1)} C_{-1,1}=\sum_{k,\ell\in \mathbf Z} q^{9k^2+9k\ell+9\ell^2-3k+3\ell+1}\end{equation}
and
$$C_{1,0}=\sum_{k,\ell\in \mathbf Z} q^{9k^2+9k\ell+9\ell^2+6k+3\ell+1}$$ and that
$C_{-1,1}=C_{1,0}$ after replacing $(k,\ell)$ by $(-k-\ell,k)$ in $C_{-1,1}.$
Therefore,
\begin{equation}\label{c-C(-1,1)}\sum_{m,n\in \mathbf Z} q^{3(m+1/3)^2+3(m+1/3)(n+1/3)+3(n+1/3)^2}=3C_{-1,1}
\end{equation} and the proof of \eqref{conj2} is complete.
\end{proof}

\begin{remark}\quad
\smallskip

\begin{enumerate}[1.]
\item
Identity \eqref{B-0} can also be proved by writing $S$
as
$$S=\omega^{-1}S_{-1}+S_0+\omega S_1,$$
where $$S_j = \sum_{\substack{m,n\in \mathbf Z\\ m-n\equiv j \pmod{3}}} q^{m^2+mn+n^2+m+n}.$$ It can be shown that
that $S_{-1}=S_0=S_{1}$, so that
$$S=(\omega^{-1}+1+\omega) S_0 = 0$$ since $1+\omega+\omega^{-1}=0$.
We leave the details to the reader.

\medskip
\item Identity \eqref{conj2} is not new. The identity and its proof can be found in \cite{Ye}, \cite[Lemma 3.13]{Cooper} and \cite[(1.11)]{Chan-Wang}.

\medskip
\item
Identities \eqref{conj1} and \eqref{conj2} (in the equivalent form \eqref{conj2-1} below) can be viewed as
  cubic analogues of \eqref{t3444a} and \eqref{t2322a}, respectively.
\end{enumerate}
\end{remark}

We are now ready to prove Theorem \ref{SH-cubic}.
\begin{proof}[Proof of Theorem \ref{SH-cubic}]
By replacing $(m,n)$ by  $(m,m+n)$ for the series on the right hand side of \eqref{conj2}, we deduce that
\begin{equation} \label{conj2-1}
\sum_{m,n\in\mathbf{Z}}  q^{(m+1/3)^2+(m+1/3)(n+1/3)+(n+1/3)^2}
=3\sum_{m,n\in\mathbf{Z}} q^{3((m+1/3)^2+(m+1/3)n+n^2)}.
\end{equation}
Thus Borweins' cubic identity \eqref{Borweins} can be expressed as
\begin{align*}
\left(\sum_{m,n\in \mathbf Z} \omega^{m-n}q^{m^2+mn+n^2}\right)^3
&+\left(3\sum_{m,n\in\mathbf{Z}} q^{3((m+1/3)^2+(m+1/3)n+n^2)}\right)^3\notag\\
 &\qquad\qquad= \left(\sum_{m,n\in \mathbf Z} q^{m^2+mn+n^2}\right)^3,
\end{align*}
which is the cubic analog of \cite[eq. (1.5)]{Chan-Chan-Sole} with $b=d=1$.

Replacing $(m,n)$ by $(m,m+n)$ in the second series on the left hand side  and the series on the right hand side of \eqref{Borweins},  we find that
\begin{align*}
\left(\sum_{m,n\in \mathbf Z} \omega^{m-n}q^{m^2+mn+n^2}\right)^3
&+\left(\sum_{m,n\in\mathbf{Z}} q^{3(m+1/3)^2+3(m+1/3)n+n^2}\right)^3\notag\\
 &\qquad\qquad= \left(\sum_{m,n\in \mathbf Z} q^{3m^2+3mn+n^2}\right)^3, %\label{Borweins-1}
\end{align*}
which, after replacing $q$ by $q^3$, that
\begin{align}
\left(\sum_{m,n\in \mathbf Z} \omega^{m-n}q^{3(m^2+mn+n^2)}\right)^3
&+\left(\sum_{m,n\in\mathbf{Z}} q^{3(3(m+1/3)^2+3(m+1/3)n+n^2)}\right)^3\notag\\
 &\qquad\qquad= \left(\sum_{m,n\in \mathbf Z} q^{3(3m^2+3mn+n^2)}\right)^3. \label{Borweins-1}
\end{align}
But by \eqref{conj1},
\begin{align}
\sum_{m,n\in\mathbf{Z}} \omega^{m-n}q^{3(m^2+mn+n^2)}
&=
\sum_{m,n\in\mathbf{Z}} \omega^{m+n}q^{m^2+mn+n^2}\notag \\
&=\sum_{m,n\in\mathbf{Z}} \omega^{2m+n}q^{3m^2+3mn+n^2}\notag\\
&=\sum_{m,n\in\mathbf{Z}} \omega^{n-m}q^{3m^2+3mn+n^2},\label{CCS-1}
\end{align}
where the second equality follows by replacing $(m,n)$ by $(m,m+n)$ and the third equality follows from $\omega^{2m}=\omega^{-m}.$
Substituting \eqref{CCS-1} into \eqref{Borweins-1},
we arrive at \eqref{CCS-gen-(3,1)}.
\end{proof}

\begin{remark}%
%Identity \eqref{Borweins-alt-1} follows from
Using \eqref{C(-1,1)}, \eqref{c-C(-1,1)}, \eqref{Borweins} and the fact that
$$C_{-1,1}= \sum_{m,n\in\mathbf{Z}} q^{9((m+1/3)^2-(m+1/3)(n+1/3)+(n+1/3)^2)},$$
we deduce that
\begin{align}
\left(\sum_{m,n\in \mathbf Z} \omega^{m-n}q^{m^2+mn+n^2}\right)^3
&+\left(3\sum_{m,n\in\mathbf{Z}} q^{3((m+1/3)^2-(m+1/3)(n+1/3)+(n+1/3)^2)}\right)^3\notag\\
 &\qquad\qquad= \left(\sum_{m,n\in \mathbf Z} q^{m^2+mn+n^2}\right)^3 \label{Borweins-alt-1}
\end{align}
Using \eqref{conj1} and \eqref{Borweins}, we arrive at
\begin{align}
\left(\sum_{m,n\in \mathbf Z} \omega^{m+n}q^{m^2+mn+n^2}\right)^3
&+\left(\sum_{m,n\in\mathbf{Z}} q^{3((m+1/3)^2+(m+1/3)(n+1/3)+(n+1/3)^2)}\right)^3\notag\\
 &\qquad\qquad= \left(\sum_{m,n\in \mathbf Z} q^{3(m^2+mn+n^2)}\right)^3. \label{Borweins-alt-2}
\end{align}
Identities \eqref{Borweins-alt-1} and \eqref{Borweins-alt-2} are equivalent to \eqref{Borweins-xy} and their discoveries are motivated by
\cite{Chan-Chan-Sole}.
\end{remark}

\section{Concluding remarks}
\label{s7}

In this article, we have shown that $A(x,y|\tau)$ and $C(x,y|\tau)$ are coefficients of certain theta series identities such as
\eqref{cubthetaen3}. We then use \eqref{cubthetaen3} to prove \eqref{Borweins-xy}. We have also connected \eqref{Borweins-xy} to identity connected to the study
of Macdonald identities. Using transformation formulas from classical texts, we derive new analogues of \eqref{Borweins-xy}.
The function $A(x,y|\tau)$ and its variations have appeared before the work of Schultz. For more details, see \cite{Bhargava}, \cite{Hirschhorn-Garvan-Borwein}, \cite{Chapman} and \cite{Cooper-article}.

In Section 5, we discover infinitely many theta series
$X,Y,Z,W$ associated with the binary quadratic forms $bm^2+bmn+dn^2$ and $dm^2+bmn+dn^2$ with $|b|<2d$, that fit the relation
$$X^2+Y^2=Z^2+W^2.$$
We are, however, unable to find theta series $X, Y, Z, W$ that satisfy
\begin{equation}\label{XYZW-3} X^3+Y^3=Z^3+W^3\end{equation}
besides those that we have presented here. In particular, we are not able to find theta series satisfying \eqref{XYZW-3} which are not associated with
the binary quadratic form $m^2+mn+n^2.$

In Section~\ref{s6}, we derive new series of one-variable $A, B, C$ that satisfy
\begin{equation}\label{ABC-3} A^3+B^3=C^3.\end{equation}
Although we have infinitely many triplets of series of one-variable $A, B, C$ satisfying $$A^2+B^2=C^2,$$
\eqref{ChanChanSole-(1,1)}
and \eqref{CCS-gen-(3,1)} are the only solutions we found for \eqref{ABC-3} and
these identities are different ways of expressing \eqref{Borweins}. We do not know if there are theta series of one-variable that satisfy \eqref{ABC-3} which are not associated with the binary quadratic form $m^2+mn+n^2$.

\medskip
\noindent
\textbf{Acknowledgments.}
We thank the anonymous referees for their healthy criticism that helped to improve the original version of this manuscript.
We also thank Siew Lian Tan for her help in improving the presentation of the article.


\begin{thebibliography}{99}

\bibitem{BBG} B.C. Berndt, S. Bhargava and F.G. Garvan,
{\it Ramanujan's theories of elliptic functions to alternative bases}, Trans. Amer. Math. Soc. \textbf{347} (1995), 4163--4244.

\bibitem{Bhargava} S. Bhargava, {\it Unification of the cubic analogues of the Jacobian theta function},
J. Math. Anal. Appl. \textbf{193} (1995), 543--558.


\bibitem{BB} J.M. Borwein and P.B. Borwein,
{\it $\pi$ and the AGM: A study in analytic number theory and computational complexity}, Wiley, Chichester, 1987.


\bibitem{Borweins} J.M. Borwein and P.B. Borwein, {\it A cubic counterpart of Jacobi's identity and the
 AGM}, Trans. Amer. Math. Soc. \textbf{323} (1991), 691--701.


\bibitem{Chan-Liaw} H.H. Chan and W.C. Liaw, {\it Cubic modular equations and new Ramanujan-type series for $1/\pi$},
Pacific J. Math. \textbf{192} (2000), 219--238.


\bibitem{Chapman} R. Chapman, {\it Cubic identities for theta series in three variables}, Ramanujan J. \textbf{8} (2005), 459--465.

\bibitem{Borweins-Garvan} J.M. Borwein, P.B. Borwein and F. Garvan, {\it Some cubic modular identities of Ramanujan},  Trans. Amer. Math. Soc. \textbf{343} (1994), 35--47.

\bibitem{Hirschhorn-Garvan-Borwein} J.M. Borwein, F.G. Garvan and M.D. Hirschhorn, {\it Cubic analogues of the Jacobian theta functions $\theta(z,q)$},
Canadian J. Math. \textbf{45} (4) (1993), 673--694.

\bibitem{Chan-book} H.H. Chan, {\it Theta functions, elliptic functions and $\pi$}, De Gruyter, Berlin/Boston, 2020.

\bibitem{Chan-Chan-Sole}
H.H. Chan, S.H. Chan and P. Sol\'e, {\it An analogue of an identity of Jacobi}, Bull. Austral. Math. Soc. \textbf{112} (2025), 528--535.

\bibitem{Chan-Cooper-Toh} H.H. Chan, S. Cooper and P.C. Toh, {\it Ramanujan's Eisenstein series and powers of Dedekind's eta-function},
J. London Math. Soc. (2) \textbf{75} (2007), no. 1, 225--242.

\bibitem{Chan-Liu-RIM} H.H. Chan and Z.G. Liu,
{\it Ramanujan's theory of elliptic functions to the cubic base}, Results Math. \textbf{80} (2025), Art.~81, 20~pp.
%\text{https://doi.org/10.1007/s00025-025-02399-1}

\bibitem{Chan-Wang} H.H. Chan and L.Q. Wang, {\it Borweins' cubic theta functions revisited}, Ramanujan J. \textbf{57} (2022), no. 1, 55--70.

\bibitem{ChanSH-Liu} S.H. Chan and Z.G. Liu, {\it On a new circular summation of theta functions}, J. Number Theory \textbf{130} (2010), 1190--1196.

\bibitem{Cooper-article} S. Cooper, {\it Cubic theta functions in three variables}, Proc. Jangjeon Math. Soc. \textbf{8} (2005), no. 1, 19--24.

\bibitem{Cooper} S. Cooper, {\it Ramanujan's theta functions}, Springer-Verlag, New York, 2017.

\bibitem{Jacobi} C.G.J. Jacobi, {\it Fundamenta nova theoriae functionum ellipticarum},
 Reprinted by Cambridge Univ. Press, K\"onigsberg, Borntraeger, 2012.

\bibitem{Lang} S. Lang, {\it Elliptic functions},
Graduate Texts in Math. \textbf{112}, Springer, New York, 1987.

\bibitem{Lawden} D.F. Lawden, {\it Elliptic functions and applications}, Springer-Verlag, New York, 1989.

\bibitem{Ma-Wei} X.R. Ma and R.Z. Wei, {\it The $t$-coefficient method III: A general series expansion
for the product of theta functions with different bases and
its applications}, J. Math. Anal. Appl. \textbf{449} (2017), 244--264.

\bibitem{Ramanujan-pi} S. Ramanujan, {\it Modular equations and approximations to $\pi$}, Quart. J. Math. (Oxford) \textbf{45} (1914),
 350--372.

\bibitem{Schoeneberg} B. Schoeneberg, {\it Elliptic modular functions}, Springer-Verlag, New York, 1974.

\bibitem{Schultz} D. Schultz, {\it Cubic theta functions},  Adv. Math. \textbf{248} (2013), 618--697.

\bibitem{Schultz-thesis} D. Schultz, {\it Cubic theta functions and the identities for Appell's $F_1$ functions},  Ph.D. thesis, University of Illinois at Urbana-Champaign, Urbana, 2015.

\bibitem{Siegel} C.L. Siegel, {\it Advanced analytic number theory}, Tata Institute of Fundamental Research, Bombay, 1980.

\bibitem{Watson-Whittaker} E.T. Whittaker and G.N. Watson, {\it A course of modern analysis}, Cambridge Univ. Press, New York, 1973.

\bibitem{XP} P. Xu, {\it Identities involving theta functions and analogues of theta functions}, Ph.D. thesis, University of Illinois at Urbana-Champaign, Urbana, 2013.

\bibitem{Yang} X.M. Yang, {\it The products of three theta functions and the general cubic theta functions}, Acta Math. Sin. Engl. Ser. \textbf{26} (2010), 1115--1124.

\bibitem{Ye} D.X. Ye, {\it On the quaternary form $x^2 + xy + 7y^2 + z^2 + zt + 7t^2$}, Int. J. Number Theory \textbf{12}  (2016), no.~7,
 1791--1800.

\end{thebibliography}
\end{document}